\documentclass[10pt,final]{siamltex}
\usepackage{amsmath}
\usepackage{bm}
\usepackage{amssymb,version}
\usepackage{cases}
\usepackage{pifont}
\usepackage{color}
\usepackage{verbatim}
\usepackage{rotating}
\usepackage{multirow}
\usepackage{graphicx}
\newtheorem{rem}{Remark}[section]
\usepackage{subfigure}
\usepackage{float}
\allowdisplaybreaks
\usepackage{silence}
\usepackage{hyperref}
\begin{document}
\graphicspath{{figures/},}
    \title{
    A new class of efficient linear higher-order schemes \\
    for the Landau-Lifshitz-Gilbert equation with reduced restriction on the damping parameter
\thanks{This work is supported in part by the National Natural Science Foundation of China (Grant Nos.12271302, 12131014, W2431008, 12501554 and 12371409) and Shandong Provincial Natural Science Foundation for Outstanding Youth Scholar (Grant No. ZR2024JQ030)}}
 \author{Fukeng Huang
        \thanks{School of Mathematical Sciences, Eastern Institute of Technology, Ningbo, China. Email: fkhuang@eitech.edu.cn}.
        \and Binghong Li
        \thanks{School of Mathematics and Mathematical Research Center, Shandong University, Jinan, Shandong, 250100, P.R. China. Email: binghongsdu@163.com}.
        \and Xiaoli Li
        \thanks{School of Mathematics and State Key Laboratory of Cryptography and Digital Economy Security, Shandong University, Jinan, Shandong, 250100, P.R. China. Email: xiaolimath@sdu.edu.cn}.
         \and Jie Shen
        \thanks{School of Mathematical Sciences, Eastern Institute of Technology, Ningbo, China. Email: jshen@eitech.edu.cn}.
}
\WarningFilter{latex}{No positions in optional float specifier.}
\maketitle
\begin{abstract} 
	Classical high-order backward differentiation formula (BDF) methods for the Landau-Lifshitz-Gilbert (LLG) equation often suffer from restrictive stability constraints, requiring small time steps and imposing stringent lower bounds on the damping parameter. These limitations become particularly severe for schemes of order higher than three.
	In this paper, we develop a class of high-order generalized BDF (GBDF) schemes for the LLG equation, including both semi-implicit and fully explicit treatments of the gyromagnetic term. The proposed schemes significantly improve stability properties and substantially relax the damping parameter constraints, but introduce essential difficulty in its analysis compared to the classical BDF schemes.
	We construct  a novel multiplier  which  enables us to carry out a  energy-based error analysis. This approach yields optimal-order error estimates under considerably weaker assumptions on the damping parameter than those required for classical BDF schemes. 
		Numerical experiments are presented to  confirm the theoretical results, and demonstrate that the proposed GBDF schemes achieve higher accuracy, enhanced stability, and much wider admissible damping regimes compared to classical high-order BDF methods.

\end{abstract}

\begin{keywords}
Landau-Lifshitz-Gilbert equation; generalized BDF; higher-order; error estimates
\end{keywords}

   \begin{AMS}
35Q56; 65M12; 65M15
    \end{AMS}
  
 \section{Introduction}
 The Landau-Lifshitz-Gilbert (LLG) equation plays a fundamental role in the modeling of magnetization dynamics in ferromagnetic materials and has been widely used in the study of magnetic storage devices, spintronics, and topological magnetic structures \cite{jia2025electrically, jiang2015blowing, nagaosa2013topological, romming2013writing}. 
 It consists of a gyromagnetic precession term and a damping term \cite{landau1935theory}, and can be written as follows:
 \begin{equation}\label{e_original model}
 	\frac{\partial \textbf{m}}{\partial t} = -\beta \textbf{m} \times \Delta \textbf{m}
 	- \gamma \textbf{m} \times ( \textbf{m} \times \Delta \textbf{m} ), \quad  \ {\rm in} \ \Omega \times J, 
 \end{equation}
 with 
 \begin{equation}\label{e_initial condition}
 	\textbf{m}(\textbf{x},0 ) = \textbf{m}_0(\textbf{x}) , \text{ with } |\textbf{m}_0(\textbf{x}) |=1, \ {\rm in}\ \Omega,
 \end{equation} 
 subject to either homogeneous Neumann or periodic boundary conditions.
 In the above, $\textbf{m}=(m_1,m_2,m_3)^T$ describes the magnetization in continuum ferromagnets, $\Omega$ is an open bounded domain in $\mathbb{R}^d$ with $d \in \{ 1,2,3\}$,  
 $J$ denotes $(0, T]$ for some $T >0$, 
 $\gamma>0$ is the Gilbert damping parameter and $\beta \neq 0$ is an exchange parameter. The solution of \eqref{e_original model} preserves pointwisely its magnitude, i.e.,
 \begin{equation}\label{e_length preserving}
 	\aligned
 	\frac{d}{dt} | \textbf{m}(\textbf{x},t)  |^2=0,
 	\endaligned
 \end{equation} 
 which, together with  \eqref{e_initial condition},   implies that $ | \textbf{m}(\textbf{x},t )|=1$.
 
 From a mathematical perspective, the LLG equation is a highly nonlinear, constrained evolution equation characterized by a pointwise unit-length condition. These structural features pose significant challenges for both the design and analysis of accurate and stable numerical methods \cite{cohen1989relaxation, kim2017mimetic, prohl2001computational, suess2002time}.
 \textcolor{black}{Methods for the non-convex constraint $|\mathbf{m}| = 1$ can be found in \cite{cimrak2007survey, kruzik2006recent}, among which the normalization method $\mathbf{m}^n = \widetilde{\mathbf{m}}^n/|\widetilde{\mathbf{m}}^n|$ is the simplest to implement.} Since it is generally difficult to deal with the highly nonlinear term $\textbf{m} \times ( \textbf{m} \times \Delta \textbf{m} )$, we first rewrite the LLG equation \eqref{e_original model} in the following equivalent form:
 \begin{numcases}{}
 	\frac{\partial \textbf{m} }{\partial t} = \gamma \Delta\textbf{m} + \gamma |\nabla \textbf{m} |^2 \textbf{m} - \beta \textbf{m}\times  \Delta\textbf{m},  \label{e_model_transform1}\\
 	| \textbf{m} (x,t)|=1\quad\forall x,\, t. \label{e_model_transform3}
 \end{numcases}  
 

Due to the strong nonlinearity and the intrinsic magnitude constraint in the LLG equation \eqref{e_model_transform1}-\eqref{e_model_transform3}, the design of robust numerical schemes and the development of rigorous error analyses remain highly challenging. Considerable efforts have been devoted to this problem over the past decades. For spatial discretization, widely used approaches include finite difference methods \cite{an2021optimal,cai2022second,xie2025error} and finite element methods \cite{akrivis2021higher,alouges2006convergence,an2025optimal,gui2022convergence}. For temporal discretization, common strategies involve fully explicit, fully implicit, and implicit–explicit (IMEX) schemes for the gyromagnetic term $m\times\Delta m$ \cite{akrivis2021higher,li2026class} and Runge–Kutta methods \cite{cervera2007numerical,gui2024implicit}. Among these, IMEX schemes have attracted particular attention due to their computational efficiency and ease of implementation.

Despite these advances, the construction and rigorous analysis of high-order time-stepping schemes for the LLG equation remain challenging. A central difficulty lies in the interaction between the nonlinear constraint and the skew-symmetric structure of the gyromagnetic term. Existing analyses of high-order BDF-type schemes show that, for orders higher than three, stability and convergence typically require restrictive lower bounds on the damping parameter $\gamma$. These constraints become increasingly severe for fourth- and fifth-order methods, significantly limiting the practical applicability of high-order schemes, especially in regimes with small damping \cite{akrivis2021higher,he2024temporal,huang2003high}. However, the regimes with small damping are particularly important in practical applications as the damping term  affects both energy dissipation and response time \cite{cai2022second}. Thus, it of critical importance for a numerical scheme to allow the damping parameter $\gamma$ as small as possible.

The origin of this difficulty can be traced to the treatment of the gyromagnetic term 
$m\times \Delta m$. When this term is handled semi-implicitly or explicitly, the standard energy techniques used in the analysis of multistep schemes encounter a fundamental obstacle: the skew-symmetric trilinear form 
$b(u,v,w)=(u\times v,w)$, which satisfies 
$b(u,v,v)=0$, no longer yields sufficient cancellation at the discrete level. In particular, within the framework of high-order explicit or semi-explicit methods, the discrete trilinear terms fail to vanish and cannot be controlled by the available dissipative mechanisms. As a consequence, existing approaches rely on the damping term to compensate for this loss of structure, leading to the aforementioned restrictive conditions.

The purpose of this paper is to overcome this fundamental limitation by employing the generalized backward differentiation formula (GBDF) method, recently introduced in \cite{huang2024new} and was shown   to possess improved stability properties compared to classical BDF schemes, to construct a class of high-order semi-implicit and fully explicit schemes for the LLG equation that exhibit significantly improved stability properties and to carry out a rigorous error analysis.
The key ingredient of our analysis is the introduction of a novel multiplier, which restores a suitable structure in the discrete energy analysis and allows us to effectively control the non-vanishing trilinear terms.

This new multiplier leads to a unified analytical framework that applies to both semi-implicit and fully explicit GBDF schemes. As a result, we are able to establish optimal-order error estimates under substantially relaxed conditions on the damping parameter, in sharp contrast with existing high-order BDF analyses. In particular, for fourth- and fifth-order schemes, the proposed approach significantly enlarges the admissible parameter regimes and enables the use of larger time steps.

Our main contributions in this paper are summarized as follows:
\begin{itemize}
 
\item We construct high-order semi-implicit and fully explicit GBDF schemes for the LLG equation, which exhibit improved stability and broader applicability compared to existing methods.

\item We establish a key lemma (Lemma 3.3) with  a novel multiplier
which  enables us to establish optimal-order error estimates  under significantly relaxed damping constraints, particularly for fourth- and fifth-order schemes as shown in Tables \ref{semi-im} and \ref{explicit}.

\item We provide numerical experiments that validate the theoretical results and demonstrate clear advantages over classical high-order BDF methods.
\end{itemize}
The remainder of the paper is organized as follows. Section 2 introduces preliminary results and the norm-preserving GBDF schemes. Section 3 presents the new multiplier and its applicability to second- through fourth-order schemes. Section 4 establishes optimal-order error estimates for both semi-implicit and fully explicit treatments of the gyromagnetic term 
$m\times \Delta m$. Section 5 discusses the extension to fifth-order schemes. Section 6 provides numerical experiments demonstrating the advantages of the proposed methods over classical BDF schemes. Finally, concluding remarks are given in the last section.

\begin{table}[]
    \centering
    \caption{Semi-implicit schemes: theoretical minimum values of $\gamma$ to ensure stability}
    \begin{tabular}{c|c|c|c}
    \hline
    \multicolumn{4}{c}{Minimum value of $\gamma$ with $\beta
     = 1$} \\
    \hline
        order & Classical BDF \cite{akrivis2021higher, li2026class} &GBDF & $n+s$\\
        \hline
         3-order& 0.0913  & 0.0862 & s = 3\\
         \hline
         4-order& 0.4041  & 0.2504 & s = 4\\
         \hline
         5-order& 4.4348  & 0.7500 & s = 5\\
         \hline
    \end{tabular}
    \label{semi-im}
\end{table}
\begin{table}[]
    \centering
    \caption{Fully explicit schemes: theoretical minimum values of $\gamma$ to ensure stability}
    \begin{tabular}{c|c|c|c}
    \hline
    \multicolumn{4}{c}{Minimum value of $\gamma$ with $\beta
     = 1$} \\
    \hline
        order & Classical BDF \cite{li2026class,xie2025error}&GBDF & $n+s$\\
        \hline
         3-order& 7.00   & 2.00 & s = 3\\
         \hline
         4-order& 27.12  & 2.33 & s = 4\\
         \hline
         5-order& 306.0 & 5.00 & s = 5\\
         \hline
    \end{tabular}
    \label{explicit}
\end{table}
  \section{Some preliminaries and norm-preserving GBDF schemes}
In this section, we describe some notations and results, which will be frequently used in this paper. Furthermore, we present the length-preserving semi-implicit and fully explicit GBDF schemes.

We use the standard notations $L^2(\Omega)$, $H^k(\Omega)$ and $W^{k,p}(\Omega)$ to denote the usual Sobolev spaces over $\Omega$. The norm corresponding to $H^k(\Omega)$ will be denoted simply by $\|\cdot\|_k$. In particular, $\|\cdot\|$ and $(\cdot,\cdot)$ are used to denote the norm and the inner product in $L^2(\Omega)$, respectively. The vectors and vector spaces will be indicated by boldface type. Throughout the paper, we use $C$, with or without subscript, to denote a positive constant, which could have different values at different appearances.

 The following lemmas will be frequently used in the error analysis.
 \medskip
 \begin{lemma}(H\"older inequality) \label{lem: Holder inequality}
 Let $p, q, s >0$ such that  $\frac{1}{p}+\frac{1}{q}+\frac{1}{s}=1$. Then for vector functions $\textbf{u} \in \textbf{L}^p(\Omega)$, $\textbf{v}\in \textbf{L}^q(\Omega)$, and scalar function $w \in L^s(\Omega)$,  we have
 \begin{equation}\label{e_Preliminaries}
 \int_{\Omega} | (\textbf{u}, \textbf{v}) w | d \textbf{x} \leq \| \textbf{u} \|_{\textbf{L}^p} \| \textbf{v} \|_{\textbf{L}^q} \| w \|_{L^s}.
\end{equation}
\end{lemma}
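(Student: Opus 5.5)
The plan is to reduce the statement to a pointwise Cauchy--Schwarz estimate, followed by the classical two-factor H\"older inequality applied twice.

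First, at almost every $\textbf{x}\in\Omega$, the Euclidean Cauchy--Schwarz inequality in $\mathbb{R}^3$ gives $|(\textbf{u}(\textbf{x}),\textbf{v}(\textbf{x}))\,w(\textbf{x})| \le |\textbf{u}(\textbf{x})|\,|\textbf{v}(\textbf{x})|\,|w(\textbf{x})|$. Here $|\cdot|$ is the Euclidean length. The norm $\|\textbf{u}\|_{\textbf{L}^p}$ is understood as $\big(\int_\Omega |\textbf{u}|^p\,d\textbf{x}\big)^{1/p}$. This reduces the claim to the scalar inequality $\int_\Omega fgh\,d\textbf{x} \le \|f\|_{L^p}\|g\|_{L^q}\|h\|_{L^s}$ for the nonnegative functions $f=|\textbf{u}|$, $g=|\textbf{v}|$, $h=|w|$.

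Next, I would set $\frac1r=\frac1q+\frac1s$, so that $\frac1p+\frac1r=1$. Applying the two-factor H\"older inequality with exponents $p$ and $r$ gives $\int fgh \le \|f\|_{L^p}\|gh\|_{L^r}$. It remains to bound $\|gh\|_{L^r}^r=\int g^r h^r$. Applying H\"older again with the conjugate pair $q/r$ and $s/r$, whose reciprocals sum to $r/q+r/s=1$, gives $\int g^rh^r \le \|g\|_{L^q}^r\|h\|_{L^s}^r$. Taking $r$-th roots and combining the two steps yields the statement.

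The only point needing care is the degenerate case where one exponent is infinite, for example $s=\infty$ with $\frac1p+\frac1q=1$. Then $h\le\|h\|_{L^\infty}$ almost everywhere can be pulled out directly, and the two-factor H\"older inequality finishes the argument. The same applies with the roles of the exponents permuted.

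I do not expect a genuine obstacle. The main thing to check is that the exponents are legitimate: $r\ge1$ holds because $\frac1q+\frac1s=1-\frac1p\le 1$, and $q/r,\ s/r\ge1$ hold because $\frac{r}{q}+\frac{r}{s}=1$ with both terms positive. Together these keep every application of H\"older in the admissible range.
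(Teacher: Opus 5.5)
Your argument is correct. Pointwise Cauchy--Schwarz, then the two-factor H\"older inequality applied twice via the auxiliary exponent $\frac1r=\frac1q+\frac1s$, is the standard proof of the three-factor inequality. The paper gives no proof and simply quotes the lemma as a standard preliminary, so the only remark is that your convention $\|\textbf{u}\|_{\textbf{L}^p}=\big(\int_\Omega|\textbf{u}|^p\,d\textbf{x}\big)^{1/p}$, using the Euclidean length, is exactly what makes the constant equal to one, and your treatment of the $L^\infty$ case covers how the lemma is later used in the estimates of $I_5$ and $I_7$.
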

 
  \begin{lemma}(Interpolation inequalities)  \label{lem: Interpolation inequalities}
  	For any $\textbf{f}$, then there exists a positive
  		constant $C$ such that
 \begin{equation}\label{e_Preliminaries2}
\| \textbf{f} \|_{\textbf{L}^k} \leq C \| \textbf{f} \|_{\textbf{L}^2}^{ \frac{6-k}{ 2k } } \| \textbf{f} \|_{\textbf{H}^1}^{ \frac{3k-6}{ 2k } }, \quad 3\le k\le 6,
\end{equation}
and
 \begin{equation}\label{e_Preliminaries3}
\aligned
\| \textbf{f} \|_{\textbf{L}^{\infty}} \leq C \| \textbf{f} \|_{\textbf{H}^1}^{ \frac{1}{ 2 } } \| \textbf{f} \|_{\textbf{H}^2}^{ \frac{1}{ 2 } }.
\endaligned
\end{equation}
\end{lemma}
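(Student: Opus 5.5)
The plan is to obtain both inequalities from the Gagliardo--Nirenberg inequality on a bounded domain with a suitable (e.g.\ Lipschitz) boundary in dimension $d\le 3$, componentwise for $\textbf{f}=(f_1,f_2,f_3)^T$. Since $\|\textbf{f}\|_{\textbf{L}^k}$ and $\sum_i\|f_i\|_{L^k}$ are equivalent up to a constant depending only on the dimension, it suffices to prove the scalar versions. To reach the whole-space results, I would use a bounded extension operator $E$ that maps $L^2$, $H^1$ and $H^2$ of $\Omega$ into the corresponding spaces on $\mathbb{R}^d$. Such an operator exists, for instance Stein's extension operator, which works for Lipschitz domains and for all orders simultaneously. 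The worst case is $d=3$; the cases $d=1,2$ follow from the same argument, or from embedding $\Omega$ into a three-dimensional domain by trivially extending $\textbf{f}$ in the extra variables.

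For \eqref{e_Preliminaries2}, I would first interpolate in Lebesgue spaces between $L^2$ and $L^6$. Writing $\frac1k=\frac{\theta}{2}+\frac{1-\theta}{6}$ gives $\theta=\frac{6-k}{2k}$ and $1-\theta=\frac{3k-6}{2k}$. H\"older's inequality (Lemma \ref{lem: Holder inequality} or its scalar analogue) then yields
\begin{equation*}
\|f\|_{L^k}\le \|f\|_{L^2}^{\theta}\|f\|_{L^6}^{1-\theta}.
\end{equation*}
Next, the Sobolev embedding $H^1(\Omega)\hookrightarrow L^6(\Omega)$, valid for $d\le3$ through the extension operator, gives $\|f\|_{L^6}\le C\|f\|_{H^1}$. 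Substituting this bound gives exactly the stated exponents. The endpoint cases $k=3$ and $k=6$ are included in this argument.

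For \eqref{e_Preliminaries3}, I would prove the Agmon-type inequality $\|g\|_{L^\infty(\mathbb{R}^3)}\le C\|g\|_{H^1}^{1/2}\|g\|_{H^2}^{1/2}$ via Fourier analysis and then apply it to $g=Ef$. Since $\|g\|_{L^\infty}\le (2\pi)^{-3}\int|\hat g|\,d\xi$, I would split the frequency integral at a radius $R$. On $|\xi|\le R$, Cauchy--Schwarz with the weight $(1+|\xi|^2)^{1/2}$ gives the bound
\begin{equation*}
\Big(\int_{|\xi|\le R}(1+|\xi|^2)^{-1}d\xi\Big)^{1/2}\|g\|_{H^1}\le CR^{1/2}\|g\|_{H^1}.
\end{equation*}
On $|\xi|>R$, using the weight $(1+|\xi|^2)$ gives
\begin{equation*}
\Big(\int_{|\xi|>R}|\xi|^{-4}d\xi\Big)^{1/2}\|g\|_{H^2}\le CR^{-1/2}\|g\|_{H^2}.
\end{equation*}
Choosing $R=\|g\|_{H^2}/\|g\|_{H^1}$ balances the two terms and gives the claim. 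The boundedness of $E$ then transfers the estimate back to $\Omega$, and $\|f\|_{L^\infty(\Omega)}\le\|Ef\|_{L^\infty}$.

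The only delicate point is the domain issue: the extension operator must act boundedly on $L^2$, $H^1$ and $H^2$ at the same time. This is what makes the multiplicative (rather than additive) forms of the inequalities survive the passage from $\mathbb{R}^d$ to $\Omega$. I would state the regularity assumption on $\partial\Omega$ (Lipschitz suffices) explicitly and invoke Stein's universal extension operator. Apart from that, both estimates are standard, for example in Adams--Fournier or Brezis.
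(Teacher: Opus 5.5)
Your proof is correct. The paper gives no proof of this lemma: it simply uses the two estimates as the standard Gagliardo--Nirenberg and Agmon inequalities, implicitly for $d=3$ on a sufficiently regular domain. Your argument is the standard justification of both. That is, H\"older interpolation between $L^2$ and $L^6$ plus $H^1\hookrightarrow L^6$ for the first, and frequency splitting applied to a single extension operator bounded on $H^1$ and $H^2$ simultaneously for the second. Your explicit Lipschitz assumption on $\partial\Omega$ supplies a hypothesis the paper's statement omits.

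The one loose remark concerns $d=2$. There the ``same argument'' does not literally give the $R^{\pm 1/2}$ balance, since $\int_{|\xi|\le R}(1+|\xi|^2)^{-1}\,d\xi\sim\log R$. It still closes because $\|g\|_{H^2}\ge\|g\|_{H^1}$, but your product-domain extension to $\Omega\times(0,1)^{3-d}$ is the cleaner way to cover $d=1,2$.
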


We will frequently use the following discrete version of the Gronwall lemma (see \cite{HeSu07, shen1990long}).

\medskip
\begin{lemma} \label{lem: gronwall2}
Let $a_k$, $b_k$, $c_k$, $d_k$, $\gamma_k$, $\Delta t_k$ be non negative real numbers such that
\begin{equation}\label{e_Gronwall3}
\aligned
a_{k+1}-a_k+b_{k+1}\Delta t_{k+1}+c_{k+1}\Delta t_{k+1}-c_k\Delta t_k\leq a_kd_k\Delta t_k+\gamma_{k+1}\Delta t_{k+1}
\endaligned
\end{equation}
for all $0\leq k\leq m$. Then
 \begin{equation}\label{e_Gronwall4}
\aligned
a_{m+1}+\sum_{k=0}^{m+1}b_k\Delta t_k \leq \exp \left(\sum_{k=0}^md_k\Delta t_k \right)\{a_0+(b_0+c_0)\Delta t_0+\sum_{k=1}^{m+1}\gamma_k\Delta t_k \}.
\endaligned
\end{equation}
\end{lemma}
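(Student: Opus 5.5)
We prove the discrete Gronwall inequality \eqref{e_Gronwall4} by summing \eqref{e_Gronwall3} over $k$ and then applying a standard induction that absorbs the $a_k d_k \Delta t_k$ term into an exponential factor. Throughout, set $S_j:=\sum_{k=0}^{j} d_k\Delta t_k$ with $S_{-1}=0$. Also set
\[
A:=a_0+(b_0+c_0)\Delta t_0+\sum_{k=1}^{m+1}\gamma_k\Delta t_k ,
\]
which is nonnegative and independent of $j$.

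\textbf{Step 1: telescoping.} Fix $0\le j\le m$ and sum \eqref{e_Gronwall3} over $k=0,\dots,j$. The differences $a_{k+1}-a_k$ and $c_{k+1}\Delta t_{k+1}-c_k\Delta t_k$ telescope, giving
\[
a_{j+1}+\sum_{k=1}^{j+1}b_k\Delta t_k + c_{j+1}\Delta t_{j+1}
\le a_0 + c_0\Delta t_0 + \sum_{k=0}^{j} a_k d_k\Delta t_k + \sum_{k=1}^{j+1}\gamma_k\Delta t_k .
\]
We add $b_0\Delta t_0$ to both sides and drop the nonnegative term $c_{j+1}\Delta t_{j+1}$ on the left. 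Since all $\gamma_k\ge0$, we may also enlarge the $\gamma$-sum to run up to $m+1$. Writing $B_{j+1}:=a_{j+1}+\sum_{k=0}^{j+1}b_k\Delta t_k$ and $B_0:=a_0+b_0\Delta t_0$, this yields
\[
B_{j+1}\le A+\sum_{k=0}^{j} d_k\Delta t_k\, a_k, \qquad 0\le j\le m .
\]
Because the $b_k$ are nonnegative, $a_k\le B_k$, so
\[
B_{j+1}\le A+\sum_{k=0}^{j} d_k\Delta t_k\, B_k .
\]

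\textbf{Step 2: induction.} We claim $B_j\le A\exp(S_{j-1})$ for $0\le j\le m+1$. The base case $j=0$ is immediate, since $B_0\le A$ and $S_{-1}=0$. For the inductive step, assume the claim for all indices $\le j$. Using $1+x\le e^x$, we have $d_k\Delta t_k\,e^{S_{k-1}}\le e^{S_k}-e^{S_{k-1}}$. Substituting the inductive hypothesis into the inequality from Step 1 gives
\[
B_{j+1}\le A\Big(1+\sum_{k=0}^{j} d_k\Delta t_k\, e^{S_{k-1}}\Big)\le A\Big(1+\sum_{k=0}^{j}\big(e^{S_k}-e^{S_{k-1}}\big)\Big)=A\,e^{S_j}.
\]
Taking $j=m$ gives $B_{m+1}\le A\exp(S_m)$, which is exactly \eqref{e_Gronwall4}.

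The argument is short. The only point needing care is the bookkeeping in Step 1: checking that the boundary terms $c_0\Delta t_0$ and $b_0\Delta t_0$ land in the prefactor as stated, and that the explicit coupling $a_kd_k\Delta t_k$ with index $k$ rather than $k+1$ is what makes the scheme explicit, so that no smallness condition on $d_k\Delta t_k$ is needed.
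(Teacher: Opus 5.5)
Your proof is correct. The telescoping in Step 1 places $c_0\Delta t_0$ and $b_0\Delta t_0$ exactly where the prefactor of \eqref{e_Gronwall4} needs them. Enlarging the $\gamma$-sum to $k=m+1$ makes $A$ independent of $j$. The bound $d_k\Delta t_k\,e^{S_{k-1}}\le e^{S_k}-e^{S_{k-1}}$, together with $A\ge 0$, then closes the strong induction. Your remark is also right that the lagged coupling $a_kd_k\Delta t_k$ is what removes any smallness condition on $d_k\Delta t_k$.

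The paper gives no proof of this lemma; it quotes it from the works of He--Sun and Shen, so there is no in-paper argument to match. A shorter route avoids both the summation and the strong induction. Set $x_k:=a_k+\sum_{i=0}^{k}b_i\Delta t_i+c_k\Delta t_k$. The hypothesis \eqref{e_Gronwall3} says exactly $x_{k+1}-x_k\le a_kd_k\Delta t_k+\gamma_{k+1}\Delta t_{k+1}$. Since $a_k\le x_k$, this gives the one-step recursion $x_{k+1}\le(1+d_k\Delta t_k)x_k+\gamma_{k+1}\Delta t_{k+1}$. Unrolling it yields $x_{m+1}\le\prod_{i=0}^{m}(1+d_i\Delta t_i)\,x_0+\sum_{k=1}^{m+1}\prod_{i=k}^{m}(1+d_i\Delta t_i)\,\gamma_k\Delta t_k$. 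Bounding every product by $e^{S_m}$ gives \eqref{e_Gronwall4}, because $x_0=a_0+(b_0+c_0)\Delta t_0$ and $x_{m+1}\ge a_{m+1}+\sum_{k=0}^{m+1}b_k\Delta t_k$.

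This recursion is more economical and slightly sharper: it keeps $c_{m+1}\Delta t_{m+1}$ on the left, and each $\gamma_k\Delta t_k$ carries only the weight $\prod_{i=k}^{m}(1+d_i\Delta t_i)$. Your sum-then-induct argument has a different advantage: it applies verbatim when the inequality is only available in summed form.
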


We recall some notations presented in \cite{huang2024new}, which are necessary for the constructed schemes and error analysis.

Set
$$\Delta t=T/N,\ t^n=n\Delta t,
\ {\rm for} \ n\leq N,$$
and
\begin{equation}\label{GBDF-n}
   \aligned
    A^s_k(\phi^n) = \sum^k_{q=0}a_{k,q} \phi^{n-k+q},\ \ B^s_k(\phi^n) = \sum^{k - 1}_{q=0}b_{k,q}\phi^{n-k+1+q},\ \ C^s_k(\phi^n) = \sum^{k - 1}_{q=0}c_{k,q}\phi^{n-k+1+q},
    \endaligned
\end{equation}
with $a_{k,q}, b_{k,q}, c_{k.q}$ defined as:
For $k = 2$,
\begin{subequations}
    \begin{align}
        &a_{2,2}(s) = \frac{2s + 1}{2},\,a_{2,1}(s) = -2s,\,a_{2,0}(s) = \frac{2s - 1}{2}, \\
        &b_{2,1}(s) = s,\,b_{2,0}(s) = - (s - 1),\,c_{2,1}(s) = s + 1,\,c_{2,0}(s) = - s.
    \end{align}
\end{subequations}
For $k = 3$,
\begin{subequations}
    \begin{align}
        &a_{3,3}(s) = \frac{3s^2 + 6s + 2}{6},\,a_{3,2}(s) = -\frac{9s^2+12s-3}{6},\,a_{3,1}(s) = \frac{9s^2+6s-6}{6}, \\
        &a_{3,0}(s) = -\frac{3s^2-1}{6},\,b_{3,2}(s) = \frac{s^2+s}{2},\,b_{3,1}(s) = - (s^2 - 1),\,b_{3,0}(s) = \frac{s^2 - s}{2},\\
        &c_{3,2}(s) = \frac{s^2 + 3s + 2}{2},\,c_{3,1}(s) = - (s^2 + 2s),\,c_{3,0}(s) = \frac{s^2 + s}{2}.
    \end{align}
\end{subequations}
For $k = 4$,
\begin{subequations}
    \begin{align}
        &a_{4,4}(s) = \frac{2s^3 + 9s^2 + 11s+3}{12},\,a_{4,3}(s) = \frac{-8s^3-30s^2-20s+10}{12},\\
        &a_{4,2}(s) = \frac{12s^3+36s^2+6s-18}{12},\,a_{4,1}(s) = \frac{-8s^3-18s^2+4s+6}{12},\\
        &a_{4,0}(s) = \frac{2s^3+3s^2 - s - 1}{12},\,b_{4,3}(s) = \frac{s^3+3s^2+2s}{6},\,b_{4,2}(s) = \frac{-s^3-2s^2+s+2}{2},\\
        &b_{4,1}(s) = \frac{s^3 + s^2 - 2s}{2},\,b_{4,0}(s) = \frac{-s^3 + s}{6},\,c_{4,3}(s) = \frac{s^3 + 6s^2 + 11s + 6}{6},\\
        &c_{4,2}(s) = \frac{-s^3 - 5s^2 - 6s}{2},\,c_{4,1}(s) = \frac{s^3 + 4s^2+3s}{2},\,c_{4,0}(s) = \frac{-s^3-3s^2-2s}{6}.
    \end{align}
\end{subequations}
And then we have the following truncation error,
\begin{equation}\label{GBDF}
    \aligned
    \frac{1}{\Delta t} A^{s}_k(\phi^{n+1}) &= \partial_t \phi^{n + s} + O(\Delta t^k),\\
    B^{s}_k(\phi^{n+1}) = \phi^{n + s} + &O(\Delta t^k),\,
    C^{s}_k(\phi^{n}) = \phi^{n + s} + O(\Delta t^k).
    \endaligned
\end{equation}

 
 Now we construct higher-order and norm preserving schemes for the LLG equation \eqref{e_model_transform1} based on the GBDF-$k$ formula. We construct below semi-implicit and explicit GBDF schemes.

Assuming $\textbf{m}^j$, $\widetilde{\textbf{m}}^{j}$ with $j=n,\,n-1,\ldots, \,n-k+1$ are given, we can solve $\widetilde{\textbf{m}}^{n+1}$ by using either 

\textbf{Scheme \uppercase\expandafter{\romannumeral1}} (Semi-implicit scheme)
\begin{align}\label{e_High-order2}
\displaystyle
\frac{1}{\Delta t}A^s_k( \widetilde{\textbf{m}}^{n+1}) =  \gamma \Delta B^s_k(\widetilde{\textbf{m}}^{n+1})+ \gamma  | \nabla C_k^s( \widetilde{\textbf{m}}^{n})  |^2 C_k^s( \textbf{m}^{n} ) - \beta C_k^s( \textbf{m}^{n} ) \times \Delta B_k^s ( \widetilde{\textbf{m}}^{n+1} ),
\end{align}
or 

\textbf{Scheme \uppercase\expandafter{\romannumeral2}} (Fully explicit scheme)
\begin{align}\label{e_High-order1}
\displaystyle
\frac{1}{\Delta t}A^s_k( \widetilde{\textbf{m}}^{n+1}) = \gamma \Delta B^s_k(\widetilde{\textbf{m}}^{n+1})+ \gamma  | \nabla C_k^s( \widetilde{\textbf{m}}^{n})  |^2 C_k^s( \textbf{m}^{n} ) - \beta C_k^s( \textbf{m}^{n} ) \times \Delta C_k^s ( \widetilde{\textbf{m}}^{n} ),
\end{align}
 where $A_k^s$ denotes the GBDF-$k$ formula, $B_k^s$ is the $k$-th order implicit difference operator, and $C_k^s$ is the $k$-th order explicit difference formula. Furthermore, we update $\mathbf{m}^{n+1}$ by 
 \begin{equation}\label{mcor}
     \displaystyle
     \textbf{m}^{n+1} =\frac{ \widetilde{\textbf{m}}^{n+1} } { | \widetilde{\textbf{m}}^{n+1}| }.
\end{equation}
At each time step, \textbf{Scheme I} requires solving a coupled linear system with variable coefficients, while  \textbf{Scheme II} only needs to solve  decoupled Poisson type equations. 

 \section{New Multiplier for the semi-implicit and fully explicit GBDF schemes}
In order to conduct the stability and error analysis for the  semi-implicit and fully explicit GBDF schemes by using energy method, a key step is to find a suitable multiplier. In this section, we propose a new multiplier and show that it is suitable for the GBDF schemes of second to fourth order.

To the end, we first split $B^s_k(\phi^{i})$ into two parts: 
\begin{equation}
    B^s_k(\phi^{i}) = \alpha_k(s)C^s_k(\phi^{i}) + D^s_k(\phi^i),
\end{equation}
with
\begin{equation}
    \alpha_2(s) = \frac{s - 1}{s},~\alpha_3(s) = \frac{s - 1}{s + 1},~\alpha_4(s) = \frac{s - 1}{s + 3},
\end{equation}

and recall the following result established in \cite{huang2024new} for GBDF method.
\medskip
\begin{lemma} \label{lem: multiplier step}
\cite{huang2024new} For $2 \leq k \leq 4$, a positive definite symmetric matrix $G_1=(g^1_{i,j}),~ G_2=(g^2_{i,j})\in \mathbb{R}^{k,k} $, and real numbers $ \delta^1_0,\ldots, \delta^1_k$ such that
\begin{align}\label{e_multiplier step}
( A_k^s (\phi^{n+1}) , C_k^s(\phi^{n+1})) = & \sum\limits_{i,j=1}^k g^1_{i,j}
( \phi^{n+1+i-k } ,  \phi^{n+1+j-k} ) \notag \\
& - \sum\limits_{i,j=1}^k g^1_{i,j}
( \phi^{n+i-k} ,  \phi^{n+j-k} )  + \|  \sum\limits_{i=0}^k  \delta^1_i \phi^{n+1+i-k} \|^2,
\end{align}
and real numbers $ \delta^2_0,\ldots, \delta^2_k$ such that
\begin{align}
( D_k^s (\phi^{n+1}) , C_k^s(\phi^{n+1})) = & \sum\limits_{i,j=1}^k g^2_{i,j}
( \phi^{n+1+i-k } ,  \phi^{n+1+j-k} ) \notag \\
& - \sum\limits_{i,j=1}^k g^2_{i,j}
( \phi^{n+i-k} ,  \phi^{n+j-k} )  + \|  \sum\limits_{i=0}^k  \delta^2_i \phi^{n+1+i-k} \|^2,
\end{align} 
where $s \geq 1$ if $k = 2,~3$ and $s \geq 2$ if $k = 4$.
\end{lemma}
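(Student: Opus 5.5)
The plan is to recast each identity as an algebraic statement about quadratic forms in $k+1$ consecutive vectors and then apply the standard $G$-stability decomposition. Write $\Phi = (\phi^{n+1-k},\dots,\phi^{n+1})$. The left-hand side $(A_k^s(\phi^{n+1}),C_k^s(\phi^{n+1}))$ is a symmetric bilinear form $\Phi^T M_1 \Phi$, with the $(k+1)\times(k+1)$ matrix $M_1=\tfrac12(a c^T + c a^T)$. Here $a=(a_{k,0},\dots,a_{k,k})$, and $c$ is the coefficient vector of $C_k^s(\phi^{n+1})$, padded by a zero in the first slot since $C_k^s(\phi^{n+1})$ involves only $\phi^{n+2-k},\dots,\phi^{n+1}$. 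The right-hand side has the form $\Phi^T\big(\mathrm{diag}_{\rm lower}(G)-\mathrm{diag}_{\rm upper}(G)\big)\Phi + (\delta^T\Phi)^2$, where $G$ is embedded in the lower-right (respectively upper-left) $k\times k$ block. Since $(\cdot,\cdot)$ is an inner product, it suffices to prove the scalar identity for real sequences. The problem therefore reduces to finding a symmetric positive definite $G$ and a vector $\delta$ with $M_1 = \tilde G_{\rm low}-\tilde G_{\rm up}+\delta\delta^T$.

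For the matching I will use generating polynomials $\rho(\zeta)=\sum_q a_{k,q}\zeta^q$ and $\sigma(\zeta)=\sum_q c_q\zeta^q$. By Dahlquist's equivalence, a $G$-stability-type decomposition $\mathrm{Re}\,\rho(\zeta)\overline{\sigma(\zeta)}\cdot$(multiplier) exists with $G$ positive definite precisely when the function $\rho(\zeta)/\sigma(\zeta)$ is positive real in the appropriate sense. Concretely, this requires $\mathrm{Re}\big(\rho(e^{i\theta})\overline{\sigma(e^{i\theta})}\big)\ge 0$ for all $\theta$, together with $\rho,\sigma$ having no common factors and $\sigma$ having roots in the closed unit disc. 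The proof will therefore proceed in two steps. First, I verify the frequency-domain positivity $\mathrm{Re}\,\rho(e^{i\theta})\overline{\sigma(e^{i\theta})}\ge 0$, writing it as a polynomial in $x=\cos\theta$ on $[-1,1]$ whose coefficients depend on $s$. Second, I construct $G$ and $\delta$ by the Fejér–Riesz factorization: the trigonometric polynomial $\mathrm{Re}\,\rho\bar\sigma$ equals $|\delta(e^{i\theta})|^2$. Subtracting $\delta\delta^T$ leaves a form whose symbol vanishes identically, and such a form is exactly a telescoping difference $\tilde G_{\rm low}-\tilde G_{\rm up}$. The matrix $G$ is then read off by solving the linear equations coefficient-by-coefficient.

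The identity for $D_k^s$ follows the same recipe with $\rho$ replaced by the generating polynomial of $D_k^s=B_k^s-\alpha_k(s)C_k^s$. Note that $D_k^s$ is not a difference operator but a consistent-to-zero combination. The specific choice of $\alpha_k(s)$ is what makes $D_k^s$ have a factor like $(1-\zeta)$ times something, and hence gives the right sign structure.

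The main obstacle is positive definiteness of $G$ together with the nonnegativity of the symbol for the full parameter range ($s\ge1$ for $k=2,3$, $s\ge2$ for $k=4$), since everything is polynomial in both $s$ and $\cos\theta$. For $k=2$ I would do it by hand, using explicit $2\times2$ matrices with determinant checks. For $k=3,4$ I would first factor out the known root at $\zeta=1$ and reduce positivity to that of a low-degree polynomial in $x\in[-1,1]$ with coefficients polynomial in $s$. I would then check its minimum via discriminant and endpoint arguments, and check positive definiteness of $G$ through leading principal minors as polynomials in $s$, verified to be positive for $s$ beyond the threshold (symbolic computation). The restriction $s\ge2$ for $k=4$ is expected to come exactly from one of these minors, or from the symbol, changing sign for small $s$. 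Since the lemma is quoted from \cite{huang2024new}, the final write-up may simply cite that reference for these explicit matrices.
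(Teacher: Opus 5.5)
Your approach to the first identity is right, and it is the same as the paper's. The lemma is only quoted from \cite{huang2024new}, but the paper proves its analogue, Lemma~\ref{GBDFlem}, this way: coprimality, zeros of the multiplier polynomial strictly inside the unit disk, the maximum principle, reduction of $\mathrm{Re}\,\eta(e^{i\theta})\overline{\mu(e^{i\theta})}$ to a polynomial in $y=\cos\theta$ with a factor $1-y$ coming from $\widetilde A^s_k(1)=0$, and then Lemma~\ref{definite G}.

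Drop the plan to certify definiteness of $G$ through principal minors ``as polynomials in $s$.'' $G$ is determined by the chosen spectral factor $\delta$. Its entries are algebraic, not polynomial, in $s$: square roots appear already for $k=2$, and roots of higher-degree polynomials in $y$ appear for $k=3,4$. Moreover, not every Fej\'er--Riesz factor yields a definite $G$. Lemma~\ref{definite G} gives definiteness for free once positive realness and coprimality are checked, and that is all you need.

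The second identity has a genuine gap, for two reasons.

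\emph{Your description of $D^s_k$ is wrong.} $B^s_k$ and $C^s_k$ both have coefficient sum $1$, so $D^s_k$ has coefficient sum $1-\alpha_k(s)\neq 0$; for example, $D^s_2(\phi^{n+1})=\phi^{n+1}/s$. It is not consistent to zero and has no factor $1-\zeta$. Its symbol equals $1-\alpha_k>0$ at $\theta=0$, so the ``factor out $\zeta=1$'' step does not apply. For $k=2,3$, $\alpha_k$ is exactly the value with $b_{k,0}=\alpha_k c_{k,0}$, i.e.\ it kills the oldest coefficient.

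\emph{Coprimality fails for the pair $(D,C)$.} Neither $D^s_k(\phi^{n+1})$ nor $C^s_k(\phi^{n+1})$ involves $\phi^{n+1-k}$. In the $(k+1)$-point indexing, both generating polynomials therefore contain the factor $\zeta$. The coprimality hypothesis you list fails, Lemma~\ref{definite G} does not apply as stated, and your recipe can return a singular $G$. Already for $k=2$ we have $(D,C)=\frac{s+1}{s}|v^2|^2-(v^1,v^2)$. Matching the $(v^0,v^2)$ coefficient forces $\delta_0\delta_2=0$, and the branch $\delta_0=0$ forces $g_{11}=0$.

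The fix has two steps.

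First, apply Lemma~\ref{definite G} to the reduced pair $\eta/\zeta,\ \mu/\zeta$ of degree $k-1$. This gives an SPD $G'\in\mathbb{R}^{(k-1)\times(k-1)}$ and $\delta'\in\mathbb{R}^k$ acting on $(v^1,\dots,v^k)$.

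Second, lift using $|\delta'\cdot(v^1,\dots,v^k)|^2=|\delta'\cdot(v^0,\dots,v^{k-1})|^2+F(v^1,\dots,v^k)-F(v^0,\dots,v^{k-1})$ with $F(w)=|\delta'\cdot w|^2$. This yields $G_2=\mathrm{diag}(0,G')+\delta'(\delta')^{T}$, which is positive definite iff $\delta'_1\neq 0$. Comparing coefficients of $|v^1|^2$ gives $(\delta'_1)^2=g'_{11}+d_0c_{k,0}$ with $d_0=b_{k,0}-\alpha_k c_{k,0}$.
\begin{itemize}
\item For $k=2,3$, $d_0=0$, so $(\delta'_1)^2=g'_{11}>0$.
\item For $k=4$, $d_0<0$ and $c_{4,0}<0$, so $(\delta'_1)^2$ is still positive.
\end{itemize}
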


\begin{rem}
The above lemma effectively relaxes the constraint on the damping parameter for the fully explicit GBDF schemes. However, a direct application of this lemma to the semi-implicit GBDF schemes does not yield a substantial improvement over the existing results, \textcolor{black}{the main reason being that
\begin{equation}
    (C^s_k(\mathbf{m}^n)\times \Delta B^s_k(\widetilde{\mathbf{m}}^{n+1}), \Delta C^s_k(\widetilde{\mathbf{m}}^{n+1})) =  (C^s_k(\mathbf{m}^n)\times \Delta D^s_k(\widetilde{\mathbf{m}}^{n+1}), \Delta C^s_k(\widetilde{\mathbf{m}}^{n+1}))
\end{equation}
which would significantly strengthen the requirements on $\gamma$ and the relevant analysis techniques can be found in \cite{huang2025stability}.} Therefore, in this paper, we introduce a new multiplier (see Lemma \ref{GBDFlem}), which leads to a significant relaxation of the restriction on the damping parameter.
\end{rem}

First, we recall the result from Dahlquist's G-stability theory \cite{dahlquist1978g}.
\medskip
\begin{lemma}\label{definite G}
    Let $\eta(\zeta) = \eta_k\zeta^k+\cdots+\eta_0$ and $\mu(\zeta) = \mu_k\zeta^k+\cdots +\mu_0$ be polynomials of degree at most $k$ (and at least one of them of degree $k$) that have no common divisors. Let $(\cdot,\cdot)$ be an inner product with associated norm $|\cdot|$. If
    \begin{equation}\label{Re>0}
      Re\frac{\eta(\zeta)}{\mu(\zeta)} > 0,~~for~|\zeta| >1,
    \end{equation}
    then there exists a symmetric positive definite matrix $G=(g_{i,j}) \in \mathbb{R}^{k \times k}$ and real $\delta_0,\cdots,\delta_k$ such that for $v^0,\cdots,v^k$ in the inner product space,
    \begin{equation}\label{G-conclusion}
        \left(\sum^k_{i=0}\eta_iv^i,\sum^k_{i=0}\mu_jv^j\right) = \sum^k_{i,j=0}g_{i,j}(v^i,v^j) - \sum^k_{i,j=0}g_{i,j}(v^{i-1},v^{j-1}) + |\sum^k_{i=0}\delta_iv^i|^2.
    \end{equation}
\end{lemma}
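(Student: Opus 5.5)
Since this is Dahlquist's classical equivalence of A-stability and G-stability \cite{dahlquist1978g}, the plan is to reproduce his argument by turning the identity \eqref{G-conclusion} into an identity between polynomials in two variables.

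\textbf{Reduction to polynomials.} Both sides of \eqref{G-conclusion} are symmetric bilinear expressions in $(v^0,\dots,v^k)$ with coefficients independent of the inner product space. It therefore suffices to match their $(k+1)\times(k+1)$ coefficient matrices, i.e.\ to prove the identity for real scalars $v^i$. Here $\sum g_{i,j}(v^{i-1},v^{j-1})$ is read as the same form shifted by one index. Encoding a symmetric matrix $H$ by $H(\zeta,\omega)=\sum h_{ij}\zeta^i\omega^j$, the requirement becomes
\begin{equation*}
\tfrac12\big(\eta(\zeta)\mu(\omega)+\eta(\omega)\mu(\zeta)\big)=(\zeta\omega-1)\,G(\zeta,\omega)+\delta(\zeta)\delta(\omega),
\end{equation*}
where $G(\zeta,\omega)=\sum_{i,j=1}^k g_{ij}\zeta^{i-1}\omega^{j-1}$ and $\delta(\zeta)=\sum\delta_i\zeta^i$.

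\textbf{Choice of $\delta$ and construction of $G$.}
\begin{enumerate}
\item Let $|\zeta|\downarrow1$ in \eqref{Re>0}. Wherever $\mu(\zeta)\neq0$ this gives $\operatorname{Re}\eta(\zeta)\overline{\mu(\zeta)}=|\mu|^2\operatorname{Re}(\eta/\mu)\ge0$. The inequality extends to the finitely many zeros of $\mu$ by continuity.
\item For real coefficients and $\zeta=e^{i\theta}$, the function $\operatorname{Re}\eta(\zeta)\overline{\mu(\zeta)}=\tfrac12(\eta(\zeta)\mu(\zeta^{-1})+\eta(\zeta^{-1})\mu(\zeta))$ is a nonnegative cosine polynomial of degree $\le k$.
\item By the Fejér–Riesz theorem it equals $|\delta(e^{i\theta})|^2$ for some real polynomial $\delta$ of degree $\le k$.
\item Consequently the polynomial $P(\zeta,\omega)=\tfrac12(\eta(\zeta)\mu(\omega)+\eta(\omega)\mu(\zeta))-\delta(\zeta)\delta(\omega)$ vanishes on the curve $\zeta\omega=1$. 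Since $\zeta\omega-1$ is irreducible, $P$ is divisible by it.
\item This defines $G=P/(\zeta\omega-1)$, which is symmetric in $(\zeta,\omega)$ and has bidegree $\le(k-1,k-1)$. It yields the symmetric matrix $(g_{ij})$ satisfying \eqref{G-conclusion}.
\end{enumerate}

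\textbf{Positive definiteness.} This is the main obstacle. Substituting $\omega=\bar\zeta$ gives
\begin{equation*}
(|\zeta|^2-1)\,G(\zeta,\bar\zeta)=\operatorname{Re}\eta(\zeta)\overline{\mu(\zeta)}-|\delta(\zeta)|^2 .
\end{equation*}
This alone does not make the Hermitian form positive. Instead I would use Dahlquist's sequence argument.
\begin{enumerate}
\item Given a nonzero vector $x\in\mathbb R^k$ with $x^TGx\le0$, build a sequence $v^0,\dots,v^{k-1}=x$ and continue it by the recursion $\sum\mu_jv^{n+j}=0$. The continuation is chosen with $\zeta$ in a region where the strict condition \eqref{Re>0} makes $(\sum\eta_iv^i,\sum\mu_jv^j)$ controllable.
\item Summing \eqref{G-conclusion} along the sequence then forces the $G$-norms to be nonincreasing.
\item Evaluating with complex exponential data $v^n=\zeta^n$, $|\zeta|>1$, makes the left side a positive multiple of $|\zeta|^{2n}$. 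This contradicts growth unless $x=0$, and the step uses that $\eta$ and $\mu$ have no common root, so that such data span all initial vectors.
\end{enumerate}
Concretely, I would first prove positive semidefiniteness by a limiting version of the identity as $|\zeta|\downarrow1$. I would then upgrade it to definiteness: a null vector of $G$ would produce a common factor of $\eta$ and $\mu$, which is excluded by the coprimality hypothesis.
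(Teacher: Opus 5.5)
The paper gives no proof of this lemma; it simply quotes it from \cite{dahlquist1978g}. Your first half is correct. You reduce \eqref{G-conclusion} to the identity $\tfrac12(\eta(\zeta)\mu(\omega)+\eta(\omega)\mu(\zeta))=(\zeta\omega-1)G(\zeta,\omega)+\delta(\zeta)\delta(\omega)$, take $\delta$ as a real Fej\'er--Riesz factor of $\operatorname{Re}\eta\bar\mu$ on $|\zeta|=1$, and divide by $\zeta\omega-1$. This produces a symmetric $G$ of the right size. One small patch: to get vanishing on the whole curve, note that $\zeta^kP(\zeta,1/\zeta)$ is a polynomial vanishing on the unit circle.

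The genuine gap is positive definiteness, which is the real content of Dahlquist's theorem, and your sketch does not establish it. A sequence generated by $\sum_j\mu_jv^{n+j}=0$ need not decay, because \eqref{Re>0} allows zeros of $\mu$ on $|\zeta|=1$; the trapezoidal rule, with $\mu(\zeta)=(\zeta+1)/2$, is an example. So monotonicity of the $G$-form contradicts nothing. The data $v^n=\zeta^n$ with $|\zeta|>1$ do not solve that recursion. Plugging them in only returns $(|\zeta|^2-1)G(\zeta,\bar\zeta)=\operatorname{Re}\eta\bar\mu-|\delta|^2$, which is information on Vandermonde vectors, and positivity on a spanning set does not give definiteness. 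That computation is in fact the proof of the converse (G-stability $\Rightarrow$ A-stability), where $G\succ0$ is assumed. Finally, the ``limiting version as $|\zeta|\downarrow1$'' and the claim that ``a null vector yields a common factor'' are asserted with no mechanism.

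The missing idea is to put $\eta$ into the recursion. Fix $\lambda<0$ and let $E$ denote the shift $v^n\mapsto v^{n+1}$. Given real $x=(v^0,\dots,v^{k-1})$, generate scalars $v^n$ by $(\eta-\lambda\mu)(E)v^n=0$.

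Every root $\zeta_0$ of $\eta-\lambda\mu$ satisfies $|\zeta_0|<1$:
\begin{itemize}
\item if $\mu(\zeta_0)=0$, then $\zeta_0$ would be a common root of $\eta$ and $\mu$;
\item otherwise $\eta(\zeta_0)/\mu(\zeta_0)=\lambda<0$, which \eqref{Re>0} excludes for $|\zeta_0|>1$, and $\operatorname{Re}\eta\bar\mu\ge0$ excludes for $|\zeta_0|=1$.
\end{itemize}
The leading coefficient $\eta_k-\lambda\mu_k$ is nonzero: if $\mu_k=0$ it equals $\eta_k\neq0$, and if $\mu_k\neq0$ it is nonzero because $\eta_k/\mu_k=\lim_{\zeta\to\infty}\eta/\mu\ge0>\lambda$.

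Hence $v^n\to0$. Telescoping your scalar identity, using $\eta(E)v^n\,\mu(E)v^n=\lambda(\mu(E)v^n)^2$, gives
\[
x^TGx=\sum_{n\ge0}\Big(|\lambda|\,(\mu(E)v^n)^2+(\delta(E)v^n)^2\Big)\ge0 .
\]
Equality forces $\mu(E)v^n=0$ for all $n$, hence $\eta(E)v^n=\lambda\,\mu(E)v^n=0$. A B\'ezout relation $p\eta+q\mu=1$ then gives $v\equiv0$, so $x=0$. This works for every real Fej\'er--Riesz factor $\delta$, so no special choice of $\delta$ is needed.
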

We set
\begin{equation}
    \aligned
    \widetilde{A}^s_k(\zeta) :=\sum^k_{q=0} a_{k,q}\zeta^q,\, \ \widetilde{Z}^s_k(\zeta): = \sum^{k-1}_{q=0} (b_{k,q} + \tau_k c_{k,q}) \zeta^{q+1} =: \sum^{k}_{q=0} z_{k,q}\zeta^{q}.
    \endaligned
\end{equation}
\begin{table}[]
    \centering
    \caption{Multiplier Comparison}
    \begin{tabular}{c|c|c}
    \hline
        Method & Multiplier &stability term \\
        \hline
         \textbf{Classical BDF}& $\phi^{n+1} - \tau'_k \phi^{n}$  & $\|\nabla \phi^{n+1}\|_{L^2}$ \\
         \hline
         \multirow{2}{*}{{\textbf{GBDF}}}& $C^s_k(\phi^{n+1})$  & $\|\nabla C^s_k(\phi^{n+1})\|_{L^2}$ \\
         & $B^s_k(\phi^{n+1}) + \tau_k C^s_k(\phi^{n+1})$  &$\|\nabla C^s_k(\phi^{n+1})\|_{L^2},\,\|\nabla D^s_k(\phi^{n+1})\|_{L^2}$ \\
         \hline
    \end{tabular}
    \label{Multiplier Comparison}
\end{table}
 \textcolor{black}{A comparison of multipliers for the different methods is presented in Table \ref{Multiplier Comparison}}. Inspired by the classical BDF and Lemma \ref{lem: multiplier step}, we propose a new multiplier for the semi-implicit schemes which allows us to prove the following key lemma.
 \medskip
 \begin{lemma}\label{GBDFlem}
     Given $s \geq 1$, there exist symmetric positive definite matrix $H_k = (h^k_{i,j}) \in \mathbb{R}^{k\times k}$ and $\tau_k$ for $k = 2,\,3,\,4$ such that
     \begin{equation}\label{GBDFk}
     \aligned
         &\left(A^s_k(\phi^{n+1}),B^s_k(\phi^{n+1}) + \tau_kC^s_k(\phi^{n+1})\right) \\
         &= \sum^k_{i,j=1}h^k_{i,j}(\phi^{n-k+1+i},\phi^{n-k+1+j}) - \sum^k_{i,j=1}h^k_{i,j}(\phi^{n-k+i},\phi^{n-k+j}) +\|\sum^k_{i=0}\theta_i \phi^{n-k+1+i}\|^2,
     \endaligned
     \end{equation}
     where $\theta_0,\cdots,\theta_k$ are real and the values of $\tau_k$ are
     $$\tau_2 = 0,~\tau_3 = 0.094,~\tau_4 = 0.275.$$ 
 \end{lemma}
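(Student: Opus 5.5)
The plan is to reduce the lemma to Dahlquist's G-stability result, Lemma \ref{definite G}, applied with $\eta = \widetilde{A}^s_k$ and $\mu = \widetilde{Z}^s_k$.

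\textbf{Rewriting the pairing.} Put $v^q = \phi^{n-k+1+q}$ for $q=0,\dots,k$. By \eqref{GBDF-n}, $A^s_k(\phi^{n+1}) = \sum_{q=0}^k a_{k,q} v^q$. Since $B^s_k(\phi^{n+1})$ and $C^s_k(\phi^{n+1})$ use the indices $n-k+2+q$ for $q=0,\dots,k-1$, the multiplier is
\[
B^s_k(\phi^{n+1})+\tau_k C^s_k(\phi^{n+1}) = \sum_{q=0}^{k} z_{k,q} v^q ,
\]
with $z_{k,0}=0$. The left side of \eqref{GBDFk} is therefore exactly $\big(\sum_q a_{k,q}v^q, \sum_q z_{k,q} v^q\big)$.

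\textbf{Verifying the hypotheses.} Lemma \ref{definite G} then yields \eqref{GBDFk}, with $H_k = G$ and $\theta_i=\delta_i$, provided two things hold:
\begin{enumerate}
\item $\widetilde{A}^s_k$ and $\widetilde{Z}^s_k$ have no common factor;
\item $\mathrm{Re}\,\widetilde{A}^s_k(\zeta)/\widetilde{Z}^s_k(\zeta) > 0$ for $|\zeta|>1$.
\end{enumerate}
For condition 1, note that $\widetilde{A}^s_k(1)=0$ by consistency of $A^s_k$, while $\widetilde{Z}^s_k(1) = 1+\tau_k \neq 0$ because $B^s_k$ and $C^s_k$ reproduce constants. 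The remaining roots of $\widetilde{A}^s_k/(\zeta-1)$ are checked against the roots of $\widetilde{Z}^s_k/\zeta$, either by resultants in $s$ or directly for the relevant range.

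For condition 2, I substitute $\zeta = 1/w$. The condition becomes positivity of the real part of a rational function on the open unit disk $|w|<1$. I will verify that the denominator $w^k\widetilde{Z}^s_k(1/w)$ has no zeros in the closed disk, i.e. the roots of $\widetilde{Z}^s_k$ lie in $|\zeta|<1$. Granting that, the function is analytic there, and by the maximum principle for harmonic functions it suffices to check the boundary inequality
\[
\mathrm{Re}\big(\widetilde{A}^s_k(e^{i\theta})\overline{\widetilde{Z}^s_k(e^{i\theta})}\big)\ \ge 0,\qquad \theta\in[0,2\pi].
\]
Strict positivity inside then follows because the function is nonconstant. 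This boundary quantity is a trigonometric polynomial $P_k(\cos\theta; s,\tau_k)$ of degree $k$ in $x=\cos\theta$. It vanishes at $x=1$ because $\widetilde{A}^s_k(1)=0$, so I factor out $(1-x)$ and reduce the claim to nonnegativity of a polynomial $Q_k(x;s,\tau_k)$ of degree $k-1$ on $[-1,1]$, uniformly in $s\ge1$.

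\textbf{Case by case.} For $k=2$ with $\tau_2=0$, the multiplier is $B^s_2$. Here $Q_2$ is linear in $x$, so checking the endpoints $x=\pm1$ for all $s\ge1$ suffices; this gives explicit inequalities in $s$. For $k=3,4$, with the stated values $\tau_3=0.094$ and $\tau_4=0.275$, $Q_k$ is quadratic or cubic in $x$ with polynomial coefficients in $s$. I would show nonnegativity by examining the endpoints and the interior critical points, or by writing $Q_k$ as a sum of squares plus nonnegative terms in $(1+x)$, $(1-x)$ and $(s-1)$. The root-location claim for $\widetilde{Z}^s_k$ is checked similarly, via Schur–Cohn conditions.

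\textbf{Main obstacle.} I expect the hardest step to be uniformity in $s\in[1,\infty)$ for $k=4$, since the coefficients grow like $s^3$. To handle this I would normalize by the leading power of $s$, check the limit $s\to\infty$ separately, and treat the compact range numerically or rigorously via interval or Sturm arguments. The specific values $\tau_k$ were presumably chosen to lie inside the admissible window, so the margins may be small.
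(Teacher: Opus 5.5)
Your reduction is the paper's: Dahlquist's lemma applied to the pair $(\widetilde A^s_k,\widetilde Z^s_k)$, coprimality by a resultant, zeros of $\widetilde Z^s_k$ in the open unit disk, the maximum principle, and nonnegativity on $[-1,1]$ of the degree-$(k-1)$ polynomial left after factoring $(1-y)$ out of $\mathrm{Re}\,\widetilde A^s_k(e^{i\theta})\widetilde Z^s_k(e^{-i\theta})$. For $k=2$ the paper instead writes an explicit sum-of-squares identity. Your route also works there, since $Q_2=(2s^2-s)(1-x)$.

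\textbf{Where the plan breaks.} The step that fails as you describe it is the one you flag as the main obstacle: normalizing by the leading power of $s$, checking $s\to\infty$, and covering a compact $s$-range numerically. The large-$s$ limit is degenerate.

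\emph{Why the limit gives no margin.} By consistency, $\widetilde A^s_k(e^{i\theta})\overline{\widetilde Z^s_k(e^{i\theta})}=i\theta(1+\tau_k e^{-i\theta})+O(\theta^{k+1})$, so $Q_k(1;s)=2\tau_k$ for every $s$. On the other hand, $s^{-2(k-1)}Q_k(x;s)\to 2^{k-1}(1+\tau_k)((k-1)!)^{-2}(1-x)^{k-1}$, which vanishes at $x=1$. The other checks degenerate in the same way: the leading parts of $\widetilde A^s_k$ and $\widetilde Z^s_k$ share the factor $(\zeta-1)^{k-1}$, and the nonzero roots of $\widetilde Z^s_k$ tend to $\zeta=1$ (for $k=3$, $|x_2|^2=\frac{s(547s-453)}{547s^2+641s+94}\to 1$). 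So the limit supplies no uniform margin, and the split into a compact range plus a limit cannot close.

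\emph{Where the binding constraint sits.} It lives in the layer $1-x\sim s^{-2}$. For fixed $v=s^2(1-x)$,
\[
Q_3\to 2\tau_3-(1-\tau_3)v+(1+\tau_3)v^2,\qquad Q_4\to 2\tau_4-\tfrac23 v^2+\tfrac{2(1+\tau_4)}{9}v^3 .
\]
Nonnegativity for all $v\ge 0$ requires $7\tau_3^2+10\tau_3-1\ge 0$ (i.e.\ $\tau_3\gtrsim 0.0938$) and $\tau_4(1+\tau_4)^2\ge \frac49$ (i.e.\ $\tau_4\gtrsim 0.2739$). This is exactly where the values $0.094$ and $0.275$ come from. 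The asymptotic margins are only about $4\times 10^{-4}$ and $3\times 10^{-3}$. For $k=3$ this appears in the paper as the tiny leading coefficient $-\frac{463}{250000}=-(7\tau_3^2+10\tau_3-1)$ of its discriminant.

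\emph{What you need instead.} Either carry out this two-scale analysis, or, as the paper does, verify exact sign conditions on the explicit coefficient polynomials $\omega_i(s)$ over the whole $s$-range. Note also that the paper's $k=4$ argument (resultant and critical-point analysis of the cubic) is only carried out for $s\ge 2$. Covering $s\in[1,2)$ for $k=4$ therefore goes beyond what the paper proves.
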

\begin{proof}
    It is easy to check that
    \begin{equation}
        \aligned
        (A^s_2(\phi^{n+1}),B^s_2(\phi^{n+1})) = &a_2\|\phi^{n+1}\|^2-a_2\|\phi^{n}\|^2+\|b_2\phi^{n+1}+c_2\phi^{n}\|^2 \\
        &-\|b_2\phi^{n}+c_2\phi^{n-1}\|^2 + \|d_2\phi^{n+1}+e_2\phi^{n}+f_2\phi^{n-1}\|^2,
        \endaligned
    \end{equation}
    where $\displaystyle e_2 = -\sqrt{2s^2-s},~c_2 = d_2 =f_2 = -\frac{e_2}{2},~b_2 =-\frac{1}{2}\sqrt{\frac{2s-1}{s}}(s+1),~a_2 = \frac{1}{4s}$. 
    
    Next we focus on the case of $k=3,4$.

    \textbf{Case \uppercase\expandafter{\romannumeral 1}}: $k = 3$. First we show $gcd(\widetilde{A}^s_3(\zeta),\widetilde{B}^s_3(\zeta)+\tau_3\zeta\widetilde{C}^s_3(\zeta)) = 1$, i.e. they have no common divisor by using $Sylvester~resultant$ as follows. The Sylvester matrix of  $\widetilde{A}^s_3(\zeta)$ and $\widetilde{B}^s_3(\zeta) + \tau_k\zeta\widetilde{C}^s_3(\zeta)$ is
    \begin{equation}
        Sly(\widetilde{A}^s_3,\widetilde{Z}^s_3) = 
        \left[\begin{array}{cccccc}
                 a_{3,3} &a_{3,2} &a_{3,1} &a_{3,0} &0 &0\\
                 0 &a_{3,3} &a_{3,2} &a_{3,1} &a_{3,0} &0\\
                 0 &0 &a_{3,3} &a_{3,2} &a_{3,1} &a_{3,0}\\
                 z_{3,3} &z_{3,2} &z_{3,1} &0 &0 &0\\
                 0 &z_{3,3} &z_{3,2} &z_{3,1} &0 &0\\
                 0 &0 &z_{3,3} &z_{3,2} &z_{3,1} &0
              \end{array}
        \right],
    \end{equation}
    \textcolor{black}{due to $z_{3,0} = 0$}. It is easy to verify that its determinant is
    \begin{equation}
        \aligned
        det(Sly(\widetilde{A}^s_3,\widetilde{Z}^s_3)) \neq 0,~for~s \geq 1,
        \endaligned
    \end{equation}
    which implies that $gcd(\widetilde{A}^s_3(\zeta),\widetilde{B}^s_3(\zeta)+\tau_3\zeta\widetilde{C}^s_3(\zeta)) = 1$.

    Next, we show $\frac{\widetilde{A}^s_3(\zeta)}{\widetilde{Z}^s_3(\zeta)}$ is holomorphic outside the unit disk in the complex plane. Actually, we only need to show that all three zeros of $\widetilde{Z}^s_3(\zeta)$ are inside the unit disk. Note that
    \begin{equation}
        \aligned
          \widetilde{Z}^s_3(x) = (z_{3,3}x^2+z_{3,2}x+z_{3,1})x
        \endaligned
    \end{equation}
    with
    \begin{equation}
    \aligned
        \Delta = z_{3,2}^2 - 4z_{3,1}z_{3,3} = -\frac{299209}{250000}s^2-\frac{25709}{125000}s + 1<0.
    \endaligned
    \end{equation}
    Therefore, $\widetilde{Z}^s_3(x) = 0$ has exactly one real root $x_1=0$ and two complex roots, denoted as $x_2,\,x_3 = \bar{x}_2$ such as
    \begin{equation}
        \aligned
        |x_2|^2 = x_2x_3 = \frac{z_{3,1}}{z_{3,3}} = \frac{s(547s-453)}{547s^2+641s+94} <1.
        \endaligned
    \end{equation}
    As a result, we have $|x_1|,~|x_2|,~|x_3| <1$ and hence $\frac{\widetilde{A}^s_3(\zeta)}{\widetilde{Z}^s_3(\zeta)}$ is holomorphic outside the unit disk.

    On the other hand, we have
    \begin{equation}
        \lim_{|\zeta|\to +\infty}\frac{\widetilde{A}^s_3(\zeta)}{\widetilde{Z}^s_3(\zeta)} = \frac{a_{3,3}}{z_{3,3}} = \frac{1500s^2+3000s+1000}{1641s^2+1923s+282} >0.
    \end{equation}
    Therefore, it follows from the maximum principle for harmonic functions that $Re\frac{\widetilde{A}^s_3(\zeta)}{\widetilde{Z}^s_3(\zeta)}>0$ for all $|\zeta| >1$ is equivalent to 
    \begin{equation}
        Re[\widetilde{A}^s_3(e^{i\theta})\widetilde{Z}^s_3(e^{-i\theta})]\geq0,~\theta \in[0,2\pi).
    \end{equation}
    Letting $y =\cos \theta$ and using Chebyshev polynomials
    \begin{equation}\label{Chebyshev}
        \cos 2\theta= 2y^2-1,~\cos3\theta = 4y^3-3y,~\cos4\theta = 8y^4-8y^2+1,
    \end{equation}
    we can easily have that
    \begin{equation}
        Re(\widetilde{A}^s_3(e^{i\theta})\widetilde{Z}^s_3(e^{-i\theta})) = (1-y)(\omega_2(s)y^2 + \omega_1(s)y + \omega_0(s)) =: (1-y)f(y)
    \end{equation}
    with
    \begin{equation}
        \aligned
          w_2(s) &= 4(\frac{s^2}{2} - \frac{1}{6})(\frac{547}{1000}s^2 + \frac{641}{1000}s + \frac{47}{500}) >0,\\
          w_1(s) &= -\frac{547}{250}s^4-\frac{641}{250}s^3+\frac{1889}{1500}s^2+\frac{1547}{1500}s-\frac{26}{125} <0,\\
          w_0(s) &= \frac{547}{500}s^4+\frac{641}{500}s^3-\frac{406}{375}s^2-\frac{151}{250}s+\frac{172}{375} >0.
        \endaligned
    \end{equation}
    It is clear that 
    \begin{align}
       \omega_2 >0,~~\Delta = \omega_1^2 - 4\omega_2\omega_0 = -\frac{463 s^4}{250000}-\frac{80493s^3}{125000} -\frac{197971s^2}{450000}+\frac{28628s}{140625}+\frac{22252}{140625}<0.     \end{align}
    which means $f(y)>0$ for all $y \in [-1,1]$. Therefore, we obtain $Re(\widetilde{A}^s_3(e^{i\theta})\widetilde{Z}^s_3(e^{-i\theta}))\geq 0$ and \eqref{GBDFk} for $k=3$ by using Lemma~\ref{definite G}.
    
    \textbf{Case \uppercase\expandafter{\romannumeral 2}}: $k = 4$. First, it is easy to verify that its determinant is
\begin{equation}
\aligned
det(Sly(\widetilde{A}^s_4,\widetilde{Z}^s_4))\neq 0,~for~s\geq2,
\endaligned
\end{equation}
which implies that $gcd(\widetilde{A}^s_4(\zeta),\widetilde{Z}^s_4(\zeta)) = 1$ by using the $Sylvester$ resultant.

Next, we show $\frac{\widetilde{A}^s_4(\zeta)}{\widetilde{Z}^s_4(\zeta)}$ is holomorphic outside the unit disk. To this end, it suffices to show that all three zeros of $\widetilde{Z}^s_4(\zeta)$ are inside the unit disk. Note that
\begin{equation}
    \widetilde{Z}^s_4(x) = x(z_{4,4}x^3+z_{4,3}x^2+z_{4,2}x+z_{4,1})=:xh(x),
\end{equation}
and
\begin{equation}
    h'(x) = 3z_{4,4}x^2+2z_{4,3}x+z_{4,2} 
\end{equation}
with 
\begin{equation}
    \aligned
    z_{4,4} &= \frac{637}{3000}s^3+\frac{387}{500}s^2+\frac{2507}{3000}s+\frac{137}{500} > 0,\\
    \Delta &= -\frac{405769}{250000}s^4-\frac{246519}{31250}s^3-\frac{2651149}{250000}s^2-\frac{79921}{125000}+4 <0,
    \endaligned
\end{equation}
which means $h(x)$ is monotonically increasing in the real axis. Recalling
\begin{equation}
    h(0) = -\frac{s(637s^2+411s-226)}{3000} <0, \ h(1) =1.274>0.
\end{equation}
Therefore, $h(x)$ has exactly one real root, denoted as $x_1\in (0, 1)$, and two complex roots, denoted as $x_2,~x_3$, in the complex plane. Next we find that
\begin{equation}
    h(-\frac{z_{4,1}}{z_{4,4}}) = \frac{s(-516949706s^3-408609383s^2+138305440s+25450764)}{500(637s^2+1685s+822)^2}<0,
\end{equation}
which implies $\displaystyle x_1 \in(-\frac{z_{4,1}}{z_{4,4}},\,1)$.
By Vieta's formula, we have that
\begin{equation}
    |x_2|^2 =x_2x_3 = -\frac{z_{4,1}}{z_{4,4}}\frac{1}{x_1} < 1.
\end{equation}
As a result, we have $|x_1|,|x_2|,|x_3|<1$ and hence $\frac{\widetilde{A}^s_4(\zeta)}{\widetilde{Z}^s_4(\zeta)}$ are holomorphic outside the unit disk.

On the other hand, we can easily have that
\begin{equation}
    \lim_{|\zeta|\to +\infty}\frac{\widetilde{A}^s_3(\zeta)}{\widetilde{Z}^s_3(\zeta)} = \frac{a_{4,4}}{z_{4,4}} >0.
\end{equation}
Therefore, it follows from the maximum principle for harmonic function that $Re(\frac{\widetilde{A}^s_4(\zeta)}{\widetilde{Z}^s_4(\zeta)}) > 0$ for all $|\zeta|>1$ is equivalent to
\begin{equation}\label{Re_4}
    Re[\widetilde{A}^s_4(e^{i\theta})\widetilde{Z}^s_4(e^{-i\theta})] \geq 0.
\end{equation}
Letting $y=cos \theta$ and using \eqref{Chebyshev}, we find that
\begin{equation}
    \aligned
    Re(\widetilde{A}^s_4(e^{i\theta})\widetilde{Z}^s_4(e^{-i\theta})) = (1-y)(\omega_3y^3 + \omega_2y^2+\omega_1y+\omega_0)=:(1-y)f(y)
    \endaligned
\end{equation}
with
\begin{align}
    w_3 &= 8(-\frac{s^3}{6}-\frac{s^2}{4}+\frac{s}{12}+\frac{1}{12})(\frac{637}{3000}s^3+\frac{387}{500}s^2+\frac{2507}{3000}s+\frac{137}{500}) <0,\notag\\
    w_2 &= \frac{637}{750}s^6+\frac{437}{100}s^5+\frac{10343}{1500}s^4+\frac{2051}{900}s^3-\frac{863}{375}s^2-\frac{2579}{2250}s-\frac{4}{125}>0,\label{w-de4}\\
    w_1 &= -\frac{637}{750}s^6-\frac{437}{100}s^5-\frac{9343}{1500}s^4-\frac{473}{1125}s^3+\frac{4541}{1500}s^2+\frac{329}{4500}s-\frac{589}{1500} < 0,\notag\\
    w_0 &= \frac{637}{2250}s^6 + \frac{437}{300}s^5 +\frac{927}{500}s^4-\frac{719}{1500}s^3-\frac{563}{450}s^2+\frac{s}{3}+\frac{79}{100}>0.\notag
\end{align}
It is clear that \eqref{Re_4} is equivalent to 
\begin{equation}\label{Re_4*}
    \aligned
    f(y)\geq0,~\forall~y \in[-1,1].
    \endaligned
\end{equation}
With $\omega$ defined in \eqref{w-de4} and $s \geq 2$, we have
\begin{align}
    f(1) >0,~f(-1) >0,
\end{align}
and
\begin{equation}
    f'(y) = 3\omega_3y^2 + 2\omega_2y+\omega_1.
\end{equation}
If $f'(y)$ does not have zero in $[-1,1]$, then we can get \eqref{Re_4*}. Otherwise, supposing that there exists $-1 \leq y \leq 1$ such that $f'(y_0) = 0$, we only need to show $f(y_0)\geq 0$. Indeed, with $f'(y_0) = 0$, we have
\begin{equation}
    \aligned
    3f(y_0)= 3f(y_0) - y_0f'(y_0) = \omega_2y_0^2+2\omega_1y_0+3\omega_0,
    \endaligned
\end{equation}
then
\begin{equation}
    \aligned
    3\omega_3f(y_0)&= \omega_3( \omega_2y_0^2+2\omega_1y_0+3\omega_0) - \frac{\omega_2}{3}f'(y_0) \\
    &= (2\omega_1\omega_3-\frac{2\omega_2^2}{3})y_0 +3\omega_0\omega_3-\frac{\omega_1\omega_2}{3}.
    \endaligned
\end{equation}
We set $\displaystyle y_1 = -\frac{9\omega_0\omega_3-\omega_1\omega_2}{6\omega_1\omega_3-2\omega_2^2}$ and find that
\begin{equation}
    \aligned
      f'(1)>0,~f'(y_1) < 0,~6\omega_1\omega_3 - 2 \omega_2^2 <0.
    \endaligned
\end{equation}
Therefore, $y_0 \in (y_1,1)$ and $3\omega_3f(y_0)<0$. Hence $f(y_0)>0$ implies \eqref{Re_4*} with $\omega_3<0$. Furthermore, we obtain $Re(\widetilde{A}^s_4(e^{i\theta})\widetilde{Z}^s_4(e^{-i\theta}))\geq 0$ and \eqref{GBDFk} for $k=4$ by using Lemma~\ref{definite G}.
\end{proof}
\section{Error estimate}
In this section, we carry out a rigorous error analysis for \eqref{e_High-order2} and \eqref{e_High-order1}. We denote $\mathbf{m}_e$ as the exact solution for the PDE system \eqref{e_original model} and 
\begin{equation}
    \displaystyle \widetilde{e}_{\textbf{m}}^{n+1}= \textbf{m}^{n+1}_e - \widetilde{\textbf{m}}^{n+1},\quad \displaystyle e_{\textbf{m}}^{n+1}= \textbf{m}^{n+1}_e - \textbf{m}^{n+1}.
\end{equation}

Before performing error analysis, it is essential to establish the relationship between the error terms $e_{\textbf{m}}^{l} $ and \(\widetilde{e}_{\textbf{m}}^{l}\). It is a well-established result in the normalization method (see \cite[Lemma 5]{li2026class}). 
\medskip
\begin{lemma}\label{lem: relation for error functions}
 Assuming that $\|\widetilde{e}^l_\textbf{m}\|_{L^\infty} < \epsilon'$ hold for $2 \leq p \leq 4$, then we have 
\begin{equation}\label{e_relation_L2}
\aligned
& \| e_{\textbf{m}}^{l} \|_{L^{\infty}} \leq C \| \widetilde{e}_{\textbf{m}}^{l} \|_{L^{\infty}},~\| e_{\textbf{m}}^{l} \|_{W^{1,p}} \leq C \| \widetilde{e}_{\textbf{m}}^{l} \|_{W^{1,p}},
\endaligned
\end{equation}
where $C$ is a positive constant  independent of $\Delta t$ and $\epsilon'$ can be chosen as an arbitrarily small positive constant to satisfy $\|\widetilde{\textbf{m}}^l\|_{L^{\infty}} \geq 1/2$.
\end{lemma}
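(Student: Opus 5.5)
The plan is to treat the statement pointwise in $x$ and then integrate, since the normalization map acts pointwise. Write $\mathbf{m}^l=\widetilde{\mathbf{m}}^l/|\widetilde{\mathbf{m}}^l|$ and use $|\mathbf{m}_e^l|=1$. The first step is to make sure the normalization is well defined and uniformly nondegenerate. Since $\|\widetilde{e}^l_{\mathbf{m}}\|_{L^\infty}<\epsilon'$ and $|\mathbf{m}_e^l|=1$, the reverse triangle inequality gives $|\widetilde{\mathbf{m}}^l|\ge 1-\epsilon'\ge 1/2$ pointwise once $\epsilon'\le 1/2$. This is the sense in which $\epsilon'$ is chosen so that $\widetilde{\mathbf{m}}^l$ stays away from zero.

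For the $L^\infty$ bound, I will use the identity
\begin{equation*}
\mathbf{m}_e^l-\mathbf{m}^l=\frac{\mathbf{m}_e^l|\widetilde{\mathbf{m}}^l|-\widetilde{\mathbf{m}}^l}{|\widetilde{\mathbf{m}}^l|}
=\frac{\widetilde{e}^l_{\mathbf{m}}+\mathbf{m}_e^l\bigl(|\widetilde{\mathbf{m}}^l|-|\mathbf{m}_e^l|\bigr)}{|\widetilde{\mathbf{m}}^l|}.
\end{equation*}
Since $\bigl||\widetilde{\mathbf{m}}^l|-|\mathbf{m}_e^l|\bigr|\le|\widetilde{e}^l_{\mathbf{m}}|$, this yields $|e^l_{\mathbf{m}}|\le 4|\widetilde{e}^l_{\mathbf{m}}|$ pointwise. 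That gives both the $L^\infty$ bound and, after integration, the $L^p$ part of the $W^{1,p}$ bound.

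For the gradient part, define the smooth map $F(\mathbf{v})=\mathbf{v}/|\mathbf{v}|$ on the shell $\{|\mathbf{v}|\ge 1/2\}$, which contains the segment between $\mathbf{m}_e^l$ and $\widetilde{\mathbf{m}}^l$ when $\epsilon'\le 1/2$. On this shell, $DF$ and $D^2F$ are bounded. Then
\begin{equation*}
\nabla e^l_{\mathbf{m}}=\nabla F(\mathbf{m}_e^l)-\nabla F(\widetilde{\mathbf{m}}^l)=DF(\mathbf{m}_e^l)\nabla\widetilde{e}^l_{\mathbf{m}}+\bigl(DF(\mathbf{m}_e^l)-DF(\widetilde{\mathbf{m}}^l)\bigr)\nabla\widetilde{\mathbf{m}}^l.
\end{equation*}
The first term is bounded by $C|\nabla\widetilde{e}^l_{\mathbf{m}}|$. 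For the second, the mean value theorem gives the bound $C|\widetilde{e}^l_{\mathbf{m}}|\,|\nabla\widetilde{\mathbf{m}}^l|$. Writing $\nabla\widetilde{\mathbf{m}}^l=\nabla\mathbf{m}_e^l-\nabla\widetilde{e}^l_{\mathbf{m}}$ splits it further into $C\|\nabla\mathbf{m}_e\|_{L^\infty}|\widetilde{e}^l_{\mathbf{m}}|+C\epsilon'|\nabla\widetilde{e}^l_{\mathbf{m}}|$. Taking $L^p$ norms, using the regularity assumption $\mathbf{m}_e\in W^{1,\infty}$, gives $\|\nabla e^l_{\mathbf{m}}\|_{L^p}\le C\|\widetilde{e}^l_{\mathbf{m}}\|_{W^{1,p}}$.

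The only delicate point is this nonlinear gradient term, whose factor $\nabla\widetilde{\mathbf{m}}^l$ is not a priori bounded. The fix is to split it into the exact-solution gradient, which is bounded by regularity, and the error gradient, which is multiplied by the small factor $\epsilon'$ and so is absorbed by the linear term. The constant $C$ then depends only on $\|\mathbf{m}_e\|_{W^{1,\infty}}$ and not on $\Delta t$, as the statement requires.
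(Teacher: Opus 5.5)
Your proof is correct. The paper gives no argument of its own for this lemma; it only cites \cite[Lemma 5]{li2026class}. Your pointwise identity for $e^l_{\mathbf m}$, combined with the chain-rule and mean-value estimate for $F(\mathbf v)=\mathbf v/|\mathbf v|$ on $\{|\mathbf v|\ge 1/2\}$ and the splitting $\nabla\widetilde{\mathbf m}^l=\nabla\mathbf m_e^l-\nabla\widetilde e^l_{\mathbf m}$ to absorb the nonlinear term, is the standard argument for it.

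One small caveat concerns regularity. The paper assumes $\mathbf m\in L^\infty(0,T;\mathbf W^{2,3}(\Omega))$, which does not give $\nabla\mathbf m_e\in L^\infty$ when $d=3$. In that case bound $\bigl\|\,|\nabla\mathbf m_e^l|\,|\widetilde e^l_{\mathbf m}|\,\bigr\|_{L^p}$ by H\"older instead. For $2\le p<3$ use $\|\nabla\mathbf m_e^l\|_{L^3}\|\widetilde e^l_{\mathbf m}\|_{L^{3p/(3-p)}}$, and for $3\le p\le 4$ use $\|\nabla\mathbf m_e^l\|_{L^{2p}}\|\widetilde e^l_{\mathbf m}\|_{L^{2p}}$. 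The Sobolev embedding of $W^{1,p}$ then gives the bound $C\|\widetilde e^l_{\mathbf m}\|_{W^{1,p}}$ in both ranges.
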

\subsection{Error analysis for the semi-implicit scheme}
Let $\textbf{R}_{\textbf{m}, k,s}^{l+1}$ be the truncation error and recalling \eqref{GBDF}, we can obtain that
\begin{equation}\label{e_error6}
\aligned
\textbf{R}_{\textbf{m},k,s }^{l+1}=& \frac{1}{\Delta t}A^s_k(\textbf{m}_e^{l+1})-\partial_t \textbf{m}^{l+s}_e -\gamma(\Delta B^s_k(\textbf{m}_e^{l+1}) - \Delta\textbf{m}_e^{l+s}) -\gamma(|\nabla C^s_k(\textbf{m}^{l}_{e})|^2C^s_k(\textbf{m}^l_e)\\
& - |\nabla \textbf{m}^{l+s}_e|^2\textbf{m}^{l+s}_e) + \beta(\textbf{m}_e^{l+s} \times \Delta B^s_k(\textbf{m}^{l+1}_e) - \textbf{m}^{l+s}_e\times\Delta \textbf{m}^{l+s}_e)=O(\Delta t^k).
\endaligned
\end{equation}

Subtracting \eqref{e_High-order1}  from \eqref{e_model_transform1} at $t^{l+s}$, we obtain an error equation  corresponding to \eqref{e_High-order2}
\begin{equation}\label{e_error7}
\aligned
 \frac{1}{\Delta t}A^s_k(\widetilde{e}^{l+1}_{\textbf{m}}) -  \gamma \Delta B^s_k(\widetilde{e}_{\textbf{m}}^{l+1})
= & \gamma \left( |\nabla C^s_k(\textbf{m}^{l}_e) |^2 C^s_k(\textbf{m}_e^{l})  -   |\nabla C^s_k( \widetilde{\textbf{m}}^{l} ) |^2 C^s_k( \textbf{m}^{l} )\right) + \textbf{R}_{\textbf{m},k,s }^{l+1} \\
& - \beta \left(  \textbf{m}^{l+s}_e \times  \Delta B^s_k(\textbf{m}^{l+1}_e) - C^s_k( \textbf{m}^{l} )  \times  \Delta B^s_k ( \widetilde{\textbf{m}}^{l+1} ) \right).
\endaligned
\end{equation}

Note that
\begin{equation}
    \aligned
    &\left(\Delta  B^s_k(\widetilde{e}_{\textbf{m}}^{l+1}),\Delta  B^s_k(\widetilde{e}_{\textbf{m}}^{l+1})+\tau_k\Delta  C^s_k(\widetilde{e}_{\textbf{m}}^{l+1})\right) \\
    &= (\alpha_k^2+\tau_k\alpha_k)\| \Delta C^s_k(\widetilde{e}_{\textbf{m}}^{l+1}) \|^2+\|\Delta D^s_k(\widetilde{e}_{\textbf{m}}^{l+1})\|^2 + (2\alpha_k+\tau_k)(\Delta  C^s_k(\widetilde{e}_{\textbf{m}}^{l+1}),\Delta  D^s_k(\widetilde{e}_{\textbf{m}}^{l+1})).
    \endaligned
\end{equation}

Taking the inner product of \eqref{e_error7} with $ - \Delta t (\Delta  B^s_k(\widetilde{e}_{\textbf{m}}^{l+1})+\tau_k\Delta  C^s_k(\widetilde{e}_{\textbf{m}}^{l+1}))$ and using Lemma~\ref{lem: multiplier step}
and Lemma~\ref{GBDFlem}, we obtain
\begin{align}
& \sum\limits_{i,j=1}^k h_{i,j}
( \nabla \widetilde{e}_{\textbf{m}}^{l+1+i-k } ,  \nabla \widetilde{e}_{\textbf{m}}^{l+1+j-k } ) - \sum\limits_{i,j=1}^k h_{i,j}
( \nabla \widetilde{e}_{\textbf{m}}^{l+i-k} ,  \nabla \widetilde{e}_{\textbf{m}}^{l+j-k} ) \notag \\
& +  \Delta t \gamma \bigg( (\alpha_k(s)^2+\tau_k\alpha_k(s))\| \Delta C^s_k(\widetilde{e}_{\textbf{m}}^{l+1}) \|^2+\|\Delta D^s_k(\widetilde{e}_{\textbf{m}}^{l+1})\|^2  \notag\\ & +(2\alpha_k + \tau_k)\sum\limits_{i,j=1}^k g^2_{i,j}(
( \Delta \widetilde{e}_{\textbf{m}}^{l+1+i-k }) ,  \Delta \widetilde{e}_{\textbf{m}}^{l+1+j-k } ) 
- ( \Delta \widetilde{e}_{\textbf{m}}^{l+i-k} ,  \Delta \widetilde{e}_{\textbf{m}}^{l+j-k} ) \bigg)\notag\\  
\leq &  \Delta t \gamma \left(  |\nabla C^s_k( \widetilde{\textbf{m}}^{l} ) |^2 -   |\nabla C^s_k( \textbf{m}^l_e ) |^2 ,  C^s_k( \textbf{m}^{l} )  (\Delta  B^s_k(\widetilde{e}_{\textbf{m}}^{l+1})+\tau_k\Delta  C^s_k(\widetilde{e}_{\textbf{m}}^{l+1})) \right) \notag \\
& -  \Delta t \gamma \left(|\nabla C^s_k( \textbf{m}^l_e ) |^2 C^s_k ( e_{\textbf{m}}^{l} ),  \Delta  B^s_k(\widetilde{e}_{\textbf{m}}^{l+1})+\tau_k\Delta  C^s_k(\widetilde{e}_{\textbf{m}}^{l+1})\right) \label{e_error8} \\
& + \textcolor{black}{ \Delta t \beta \left( \textbf{m}_e^{l+s}  \times  \Delta B^s_k ( \widetilde{e}_\textbf{m}^{l+1} ) , 
 \Delta  B^s_k(\widetilde{e}_{\textbf{m}}^{l+1})+\tau_k\Delta  C^s_k(\widetilde{e}_{\textbf{m}}^{l+1})\right) } \notag\\
& + \textcolor{black}{ \Delta t \beta \left(( \textbf{m}^{l+s}_e - C^s_k( \textbf{m}^{l}_e ))  \times  \Delta B^s_k ( \textbf{m}_e^{l+1} ) , 
 \Delta  B^s_k(\widetilde{e}_{\textbf{m}}^{l+1})+\tau_k\Delta  C^s_k(\widetilde{e}_{\textbf{m}}^{l+1})\right) } \notag\\
 &- \textcolor{black}{ \Delta t \beta \left(( \textbf{m}^{l+s}_e - C^s_k( \textbf{m}^{l}_e ))  \times  \Delta B^s_k (\widetilde{e}^{l+1}_\textbf{m}) , 
 \Delta  B^s_k(\widetilde{e}_{\textbf{m}}^{l+1})+\tau_k\Delta  C^s_k(\widetilde{e}_{\textbf{m}}^{l+1})\right) } \notag\\
 & + \textcolor{black}{ \Delta t \beta \left( C^s_k( e_\textbf{m}^{l} )  \times  \Delta B^s_k ( \textbf{m}_e^{l+1} ) , 
 \Delta  B^s_k(\widetilde{e}_{\textbf{m}}^{l+1})+\tau_k\Delta  C^s_k(\widetilde{e}_{\textbf{m}}^{l+1})\right) } \notag\\
  & - \textcolor{black}{ \Delta t \beta \left( C^s_k( e_\textbf{m}^{l} )  \times  \Delta B^s_k (\widetilde{e}^{l+1}_\textbf{m}), 
 \Delta  B^s_k(\widetilde{e}_{\textbf{m}}^{l+1})+\tau_k\Delta  C^s_k(\widetilde{e}_{\textbf{m}}^{l+1})\right) } \notag\\
 & +  \Delta t \left( \textbf{R}_{\textbf{m},k,s}^{l+1}, \Delta  B^s_k(\widetilde{e}_{\textbf{m}}^{l+1})+\tau_k\Delta  C^s_k(\widetilde{e}_{\textbf{m}}^{l+1})\right)  \notag \\
=: &\Delta t( I_1+ I_2 + I_3 + I_4 + I_5 + I_6 + I_7 + I_8).  \notag
\end{align}

Next we derive the main error estimate for the semi-implicit scheme \eqref{e_High-order2}.
\medskip
\begin{theorem}\label{the: error_estimate_final_semi}
 Supposing that the damping parameter $\gamma$ satisfies
  \begin{equation} \label{e_beta_value2}
\gamma > \frac{ |\beta | \tau_k }{ 2\sqrt{\alpha_k(s)^2+\tau_k\alpha_k(s)}}
  \end{equation}
 and assuming  $\textbf{m}\in H^{k+1}(0,T;\textbf{L}^2(\Omega))\bigcap H^{k}(0,T;\textbf{H}^{2}(\Omega)) \bigcap L^{\infty}(0,T;\textbf{W}^{2,\,3}(\Omega))$, then for the semi-implicit scheme \eqref{e_High-order2}, we have
 \begin{equation}  \label{e_final_semi}
\aligned
\|  e_\textbf{m}^{n+1}\|^2 + \|  \nabla e_\textbf{m}^{n+1}\|^2  \leq  C(\Delta t)^{2k}, \ \forall\,0\leq n\leq N-1,\ 2\le k\le 4,
\endaligned
\end{equation} 
where $C$ is a positive constant independent of $\Delta t$ and $\Delta t \leq \epsilon_1$ with positive constant $\epsilon_1$ referred in \eqref{epsilon1}.
\end{theorem}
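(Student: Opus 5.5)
The proof is an induction on $n$ combined with the energy identity \eqref{e_error8}. The induction hypothesis is that for all $l\le n$ the bound \eqref{e_final_semi} holds for $\widetilde e^{l}_{\textbf m}$, together with $\|\Delta\widetilde e^{l}_{\textbf m}\|$ small in the sense that $\Delta t\sum\|\Delta C^s_k(\widetilde e^{l}_{\textbf m})\|^2\le C\Delta t^{2k}$. The base case is given by the starting values. From this hypothesis, Lemma \ref{lem: Interpolation inequalities} and an inverse-type smallness argument give $\|\widetilde e^l_{\textbf m}\|_{L^\infty}\le C\Delta t^{k-1/2}<\epsilon'$ once $\Delta t\le\epsilon_1$, and the same bound holds for $\|\nabla\widetilde e^l_{\textbf m}\|_{L^3}$. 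Lemma \ref{lem: relation for error functions} then transfers these bounds to $e^l_{\textbf m}$ in $L^\infty$ and $W^{1,p}$. In particular $\|C^s_k(\textbf m^l)\|_{L^\infty}$ and $\|\nabla C^s_k(\widetilde{\textbf m}^l)\|_{L^3}$ stay uniformly bounded.

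The left-hand side of \eqref{e_error8} gives a telescoping $H_k$-form in $\nabla\widetilde e$, by Lemma \ref{GBDFlem}, and a telescoping $G_2$-form in $\Delta\widetilde e$, by Lemma \ref{lem: multiplier step} applied to the cross term $(\Delta C,\Delta D)$. It also gives the dissipation
\[
\gamma\big[(\alpha_k^2+\tau_k\alpha_k)\|\Delta C^s_k(\widetilde e^{l+1}_{\textbf m})\|^2+\|\Delta D^s_k(\widetilde e^{l+1}_{\textbf m})\|^2\big].
\]
The decisive term is $I_3$. Write $B=\alpha_kC+D$ and use $(u\times v,v)=0$. Then
\[
(\textbf m_e\times\Delta B,\Delta B+\tau_k\Delta C)=\tau_k(\textbf m_e\times\Delta D,\Delta C),
\]
and since $|\textbf m_e|=1$ this gives $|I_3|\le|\beta|\tau_k\|\Delta D\|\|\Delta C\|$. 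Young's inequality absorbs it into the dissipation exactly when $\gamma^2(\alpha_k^2+\tau_k\alpha_k)>\beta^2\tau_k^2/4$. This is hypothesis \eqref{e_beta_value2}, and the strict inequality leaves a margin $\epsilon_0\gamma(\|\Delta C\|^2+\|\Delta D\|^2)$ to absorb everything else.

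The remaining terms are handled as follows.
\begin{itemize}
\item $I_5$ and $I_7$ contain $\textbf m_e^{l+s}-C^s_k(\textbf m_e^l)=O(\Delta t^k)$ and $\|C^s_k(e^l_{\textbf m})\|_{L^\infty}$. Both factors are small, so these terms are bounded by a small multiple of $\|\Delta C\|^2+\|\Delta D\|^2$.
\item $I_4$, $I_6$ and $I_8$ are bounded by Hölder's inequality, using the regularity $\textbf m\in L^\infty(W^{2,3})$ for $\Delta B^s_k(\textbf m_e)$, then Young's inequality. This yields $C(\|\nabla C^s_k(e^l_{\textbf m})\|^2+\|C^s_k(e^l_{\textbf m})\|^2+\|\textbf R\|^2+\Delta t^{2k})$ plus absorbable pieces.
\item $I_1$ and $I_2$ are treated by factoring $|\nabla a|^2-|\nabla b|^2=\nabla(a-b):\nabla(a+b)$. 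Then apply Lemma \ref{lem: Holder inequality} with $L^3\cdot L^6\cdot L^2$ and the $L^\infty$ bounds from the induction hypothesis. This gives $C\|\nabla C^s_k(\widetilde e^l_{\textbf m})\|^2+C\|C^s_k(e^l_{\textbf m})\|_1^2$ plus absorbable terms.
\end{itemize}
The $L^2$ part of the error is obtained either by a parallel test with $B+\tau_kC$ itself, or through the Poincaré/mean control available for the Neumann or periodic problem.

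Summing over $l$ uses the positive definiteness of $H_k$ and $G_2$. Lemma \ref{lem: relation for error functions} replaces $e$ by $\widetilde e$ on the right. Lemma \ref{lem: gronwall2} then gives $\|\nabla\widetilde e^{n+1}_{\textbf m}\|^2+\Delta t\sum\|\Delta C\|^2\le C\Delta t^{2k}$, which closes the induction and yields \eqref{e_final_semi} via Lemma \ref{lem: relation for error functions}.

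I expect the main obstacle to be the induction closure, namely obtaining the $L^\infty$ smallness of $\widetilde e^{n+1}_{\textbf m}$ needed for the normalization lemma at the next step. The plan is to use \eqref{e_Preliminaries3} with the $H^1$ bound of order $\Delta t^k$ and the $H^2$ bound of order $\Delta t^{k-1/2}$ from the dissipation, which gives $\Delta t^{k-1/4}\le\epsilon'$ for $\Delta t\le\epsilon_1$. A second delicate point is that the $G_2$-form carries the coefficient $2\alpha_k+\tau_k$, which is positive here, so it stays on the left-hand side and causes no difficulty.
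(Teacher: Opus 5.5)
Your proposal is correct and follows essentially the paper's proof. The ingredients match: the multiplier $\Delta B^s_k+\tau_k\Delta C^s_k$, the cross-product cancellation reducing $I_3$ to $\tau_k\beta(\textbf{m}_e^{l+s}\times\Delta D^s_k,\Delta C^s_k)$ so that Young's inequality gives exactly \eqref{e_beta_value2}, a parallel $L^2$-level test, summation with the positive definite $H_k$ and $G_2$ forms, Gronwall, and a bootstrap for the $L^\infty$ smallness required by Lemma~\ref{lem: relation for error functions}. The only real difference is that you state the induction hypothesis as the target rate rather than as $\|\widetilde e^l_{\textbf{m}}\|_{H^2}\le(\Delta t)^{1/4}$.

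Two small corrections, neither affecting validity:
\begin{itemize}
\item The pointwise bound $\Delta t\|\Delta\widetilde e^{n+1}_{\textbf{m}}\|^2\le C\Delta t^{2k}$ that closes the bootstrap comes most directly from the telescoped $G_2$-form (coefficient $2\alpha_k+\tau_k>0$). The dissipation sum only controls $\Delta C^s_k$ and $\Delta D^s_k$.
\item The ``Poincar\'e/mean control'' alternative would not by itself give the $L^2$ part under Neumann or periodic conditions, since the mean of the error is not controlled by its gradient. Keep the parallel test.
\end{itemize}
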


\begin{proof}
We shall first prove, by an induction process with a bootstrap argument
\begin{equation}\label{e_error2}
\aligned
\| \widetilde{e}_{\textbf{m}}^{n}  \|_{H^2}  \leq (\Delta t)^{1/4} , \ \forall \,n \leq N.
\endaligned
\end{equation}

Obviously \eqref{e_error2} holds for $n=0$. Now we suppose 
\begin{equation}\label{e_error4}
\aligned
\| \widetilde{e}_{\textbf{m}}^{l}  \|_{H^2} \leq (\Delta t)^{1/4} , \ \forall \,l \leq n,
\endaligned
\end{equation}
and prove below that $ \| \widetilde{e}_{\textbf{m}}^{n+1}  \|_{H^2}  \leq (\Delta t)^{ 1/4}  $ hold true. We would like to highlight that Lemma \ref{lem: relation for error functions} can be applied if \(\Delta t < (\epsilon')^4\).

\noindent{\bf Step 1: Error estimate for the term $\widetilde{e}_{\mathbf{m}}$ in $H^2$-norm.} 

We bound the above eight terms in \eqref{e_error8} as follows. Using the H\"older inequality and the interpolation inequality in Lemma \ref{lem: Holder inequality} and Lemma \ref{lem: relation for error functions}, the first term can be estimated by
\begin{align}
 | I_1 |  = &  \left|\gamma \left( \nabla C^s_k( \widetilde{e}_{\textbf{m}}^{l} ) \nabla C^s_k( \textbf{m}^l_e + \widetilde{\textbf{m}}^l )  , C^s_k( \textbf{m}^{l} )  (\Delta  B^s_k(\widetilde{e}_{\textbf{m}}^{l+1})+\tau_k\Delta  C^s_k(\widetilde{e}_{\textbf{m}}^{l+1}))   \right) \right|\notag  \\
\leq & \gamma \| \nabla C^s_k( \widetilde{e}_{\textbf{m}}^{l} ) \|_{L^3}\|\nabla C^s_k( \textbf{m}^l_e + \widetilde{\textbf{m}}^l )\|_{L^6} \| C^s_k( \textbf{m}^{l}) (\Delta  B^s_k(\widetilde{e}_{\textbf{m}}^{l+1})+\tau_k\Delta  C^s_k(\widetilde{e}_{\textbf{m}}^{l+1})) \|_{L^2} \notag\\
\leq &C \| \nabla C^s_k( \widetilde{e}_{\textbf{m}}^{l} )  \|_{L^3}   \|\Delta  B^s_k(\widetilde{e}_{\textbf{m}}^{l+1})+\tau_k\Delta  C^s_k(\widetilde{e}_{\textbf{m}}^{l+1}) \|_{L^2}, \label{e_error_gamma1}\\
\leq&  C  \| \nabla C^s_k( \widetilde{e}_{\textbf{m}}^{l} ) \|_{L^2}^{1/2}   \| \Delta C^s_k( \widetilde{e}_{\textbf{m}}^{l} )\|_{L^2}^{1/2}  \| \Delta  B^s_k(\widetilde{e}_{\textbf{m}}^{l+1})+\tau_k\Delta  C^s_k(\widetilde{e}_{\textbf{m}}^{l+1}) \|_{L^2} \notag  \\
\leq &  \epsilon  \|  \Delta  C^s_k(\widetilde{e}_{\textbf{m}}^{l+1}) \|_{L^2}^2 + \epsilon  \| \Delta C^s_k(\widetilde{e}_{\textbf{m}}^{l}) \|_{L^2}^2  +\epsilon  \|  \Delta  D^s_k(\widetilde{e}_{\textbf{m}}^{l+1}) \|_{L^2}^2 + C  \|  \nabla  C^s_k( \widetilde{e}_{\textbf{m}}^{l} ) \|_{L^2}^2 , \notag 
\end{align}
where $\displaystyle \|\nabla C^s_k(\textbf{m}^l)\|_{L^6} \leq \displaystyle \|\nabla C^s_k(\textbf{m}^l_e)\|_{L^6} + C\|\nabla C^s_k(\widetilde{e}_\textbf{m}^l)\|_{H^1} \leq C$ and  $\epsilon$ is an arbitrarily small positive constant which is independent of $\Delta t$ and will be determined below.

By using the H\"older  inequality and \eqref{e_relation_L2} , the second term on the right-hand side of \eqref{e_error8} can be bounded by 
\begin{align}
 | I_2 |  = &  \gamma |  \left(  | \nabla C^s_k( \textbf{m}^l_e ) |^2 C^s_k ( e_{\textbf{m}}^{l} ),   \Delta  B^s_k(\widetilde{e}_{\textbf{m}}^{l+1})+\tau_k\Delta  C^s_k(\widetilde{e}_{\textbf{m}}^{l+1}) \right)  |  \notag \\
\leq & \gamma \| | \nabla C^s_k( \textbf{m}^l_e ) |^2  \|_{L^3} \| C^s_k( e_{\textbf{m}}^{l} ) \|_{L^6} \|  \Delta  B^s_k(\widetilde{e}_{\textbf{m}}^{l+1})+\tau_k\Delta  C^s_k(\widetilde{e}_{\textbf{m}}^{l+1})\|_{L^2} \notag \\
\leq & C \| C^s_k( e_{\textbf{m}}^{l} ) \|_{H^1} \|  \Delta  B^s_k(\widetilde{e}_{\textbf{m}}^{l+1})+\tau_k\Delta  C^s_k(\widetilde{e}_{\textbf{m}}^{l+1}) \|_{L^2}  \label{e_error_gamma2}\\
\leq & \epsilon  \|  \Delta  C^s_k(\widetilde{e}_{\textbf{m}}^{l+1}) \|_{L^2}^2 + \epsilon  \|  \Delta  D^s_k(\widetilde{e}_{\textbf{m}}^{l+1}) \|_{L^2}^2 + C \sum^{k}_{i = 1}\| \widetilde{e}_{\textbf{m}}^{l + k - i}  \|^2 _{H^1} . \notag
\end{align}

The third term on the right-hand side of \eqref{e_error8} plays a crucial role in the estimate of the stability term, and it is  essential to keep this term as small as possible. By applying the Cauchy-Schwarz inequality, we have
\begin{align}\label{e_error_beta1}
|I_3| &= | \beta | \left|  \left(   \textbf{m}^{l+s}_e \times  \Delta B^s_k(\widetilde{e}_{\textbf{m}}^{l+1}) , \Delta  B^s_k(\widetilde{e}_{\textbf{m}}^{l+1}) + \tau_k \Delta C^s_k(  \widetilde{e}_{\textbf{m}}^{l+1}) \right)\right| \notag\\
&= | \beta | \left|  \left(   \textbf{m}^{l+s}_e \times  \Delta D^s_k(\widetilde{e}_{\textbf{m}}^{l+1}) , \tau_k \Delta C^s_k(  \widetilde{e}_{\textbf{m}}^{l+1}) \right)\right|\notag\\
&\leq  \frac{\tau_k|\beta|\sqrt{\alpha_k^2+\tau_k\alpha_k}}{2}\|\Delta C^s_k(\widetilde{e}_{\textbf{m}}^{l+1})\|^2_{L^2} + \frac{\tau_k|\beta|}{2\sqrt{\alpha_k^2+\tau_k\alpha_k}}\|\Delta D^s_k(\widetilde{e}^{l+1}_\textbf{m})\|^2_{L^2},
\end{align}
where we use the property of cross product $(a\times b, a)=(a\times b,b)=0,\, \forall a, b \in R^d$ and $B_k^s=\alpha_k C_k^s+D_k^s$ in the second equality.

For the fourth term on the right hand side of \eqref{e_error8} by applying Cauchy-Schwarz inequality and \eqref{e_error6}, we obtain
\begin{align}\label{e_error_beta2}
    |I_4| &= |\beta|\left|\left(( \textbf{m}^{l+s}_e - C^s_k( \textbf{m}^{l}_e ))  \times  \Delta B^s_k ( \textbf{m}_e^{l+1} ) , 
 \Delta  B^s_k(\widetilde{e}_{\textbf{m}}^{l+1})+\tau_k\Delta  C^s_k(\widetilde{e}_{\textbf{m}}^{l+1})\right) \right| \notag\\
 &\leq |\beta|\|\textbf{m}^{l+s}_e - C^s_k(\textbf{m}^l_e)\|_{L^6}\|\Delta B^s_k(\textbf{m}^{l+1}_e)\|_{L^3}\|\Delta  B^s_k(\widetilde{e}_{\textbf{m}}^{l+1})+\tau_k\Delta  C^s_k(\widetilde{e}_{\textbf{m}}^{l+1})\|_{L^2} \\
 &\leq \epsilon  \|  \Delta  C^s_k(\widetilde{e}_{\textbf{m}}^{l+1}) \|_{L^2}^2 + \epsilon  \|  \Delta  D^s_k(\widetilde{e}_{\textbf{m}}^{l+1}) \|_{L^2}^2 + C (\Delta t)^{2k} \notag
 \end{align}

 The fifth term on the right hand side of \eqref{e_error8} can be estimated by
 \begin{align}
     |I_5| &= |\beta|\left|\left(( \textbf{m}^{l+s}_e - C^s_k( \textbf{m}^{l}_e ))  \times  \Delta B^s_k (\widetilde{e}^{l+1}_\textbf{m}) , 
 \Delta  B^s_k(\widetilde{e}_{\textbf{m}}^{l+1})+\tau_k\Delta  C^s_k(\widetilde{e}_{\textbf{m}}^{l+1})\right)\right| \notag\\
 &\leq |\beta| \|\textbf{m}^{l+s}_e - C^s_k(\textbf{m}^l_e)\|_{L^\infty}\|\Delta B^s_k(\widetilde{e}^{l+1}_\textbf{m})\|_{L^2}\|\Delta  B^s_k(\widetilde{e}_{\textbf{m}}^{l+1})+\tau_k\Delta  C^s_k(\widetilde{e}_{\textbf{m}}^{l+1})\|_{L^2} \\
 &\leq C(\Delta t)^{k}\|\Delta B^s_k(\widetilde{e}^{l+1}_\textbf{m})\|_{L^2}\|\Delta  B^s_k(\widetilde{e}_{\textbf{m}}^{l+1})+\tau_k\Delta  C^s_k(\widetilde{e}_{\textbf{m}}^{l+1})\|_{L^2} \notag \\
 &\leq \epsilon  \|  \Delta  C^s_k(\widetilde{e}_{\textbf{m}}^{l+1}) \|_{L^2}^2 + \epsilon  \|  \Delta  D^s_k(\widetilde{e}_{\textbf{m}}^{l+1}) \|_{L^2}^2, \notag
 \end{align}
 with $\Delta t \leq (\epsilon/C_1)^{1/k}$.
 
 Applying Cauchy-Schwarz inequality and \eqref{e_relation_L2} , the sixth term on the right hand side of \eqref{e_error8} can be estimated by
 \begin{align}
     |I_6| &= |\beta|\left| \left( C^s_k( e_\textbf{m}^{l} )  \times  \Delta B^s_k ( \textbf{m}_e^{l+1} ) , 
 \Delta  B^s_k(\widetilde{e}_{\textbf{m}}^{l+1})+\tau_k\Delta  C^s_k(\widetilde{e}_{\textbf{m}}^{l+1})\right)\right| \notag\\
 &\leq |\beta|\|C^s_k(e^l_\textbf{m})\|_{L^6}\|\Delta B^s_k ( \textbf{m}_e^{l+1} )\|_{L^3}\|\Delta  B^s_k(\widetilde{e}_{\textbf{m}}^{l+1})+\tau_k\Delta  C^s_k(\widetilde{e}_{\textbf{m}}^{l+1})\|_{L^2}\\
 &\leq \epsilon  \|  \Delta  C^s_k(\widetilde{e}_{\textbf{m}}^{l+1}) \|_{L^2}^2 + \epsilon  \|  \Delta  D^s_k(\widetilde{e}_{\textbf{m}}^{l+1}) \|_{L^2}^2 + C \sum^{k}_{i = 1}\| \widetilde{e}_{\textbf{m}}^{l + k - i}  \|^2 _{H^1}. \notag
 \end{align}

 For the seventh term on the right hand side of \eqref{e_error8} by applying Cauchy-Schwarz inequality and \eqref{e_error6}, we have
 \begin{align}
     |I_7| &= |\beta|\left|\left( C^s_k( e_\textbf{m}^{l} )  \times  \Delta B^s_k (\widetilde{e}^{l+1}_\textbf{m}), 
 \Delta  B^s_k(\widetilde{e}_{\textbf{m}}^{l+1})+\tau_k\Delta  C^s_k(\widetilde{e}_{\textbf{m}}^{l+1})\right)\right| \notag\\
 &\leq |\beta|\|C^s_k(e^l_\textbf{m})\|_{L^\infty}\|\Delta B^s_k (\widetilde{e}^{l+1}_\textbf{m})\|_{L^2}\|\Delta  B^s_k(\widetilde{e}_{\textbf{m}}^{l+1})+\tau_k\Delta  C^s_k(\widetilde{e}_{\textbf{m}}^{l+1})\|_{L^2}\\
 &\leq C(\Delta t)^{1/4}\|\Delta B^s_k (\widetilde{e}^{l+1}_\textbf{m})\|_{L^2}\|\Delta  B^s_k(\widetilde{e}_{\textbf{m}}^{l+1})+\tau_k\Delta  C^s_k(\widetilde{e}_{\textbf{m}}^{l+1})\|_{L^2}\notag\\
 &\leq \epsilon  \|  \Delta  C^s_k(\widetilde{e}_{\textbf{m}}^{l+1}) \|_{L^2}^2 + \epsilon  \|  \Delta  D^s_k(\widetilde{e}_{\textbf{m}}^{l+1}) \|_{L^2}^2, \notag
 \end{align}
 with $\Delta t \leq (\epsilon/C_2)^{4}$.
 
Recalling \eqref{e_error6}, the final term on the right hand side of \eqref{e_error8} can be controlled by
  \begin{align}\label{e_error_R}
  | I_8 |  &= | ( \textbf{R}_{\textbf{m},k,s}^{l+1}, \Delta  B^s_k(\widetilde{e}_{\textbf{m}}^{l+1})+\tau_k\Delta  C^s_k(\widetilde{e}_{\textbf{m}}^{l+1})) | \notag \\
  &\leq    \epsilon  \|  \Delta  C^s_k(\widetilde{e}_{\textbf{m}}^{l+1} )\|_{L^2}^2 + \epsilon  \|  \Delta  D^s_k(\widetilde{e}_{\textbf{m}}^{l+1} )\|_{L^2}^2
   + C (\Delta t)^{2k}.
\end{align} 
It can be observed that the control term on the right-hand side depends on the $\|\widetilde{e}^{l+1}_\textbf{m}\|_{L^2}$. This issue can be addressed straightforwardly by taking the inner product of \eqref{e_error7} with \(\Delta t\,\widetilde{e}^{l+1}_\textbf{m}\).
Since the magnitude of the norm control is reduced at this point, the expected control term and the right-hand side term can be easily obtained, \textcolor{black}{without introducing uncontrollable $H^2$ terms}. Therefore by combining \eqref{e_error8} with \eqref{e_error_gamma1}-\eqref{e_error_R}, we obtain
\begin{align}
& \sum\limits_{i,j=1}^k h_{i,j}
( \nabla \widetilde{e}_{\textbf{m}}^{l+1+i-k } ,  \nabla \widetilde{e}_{\textbf{m}}^{l+1+j-k } ) - \sum\limits_{i,j=1}^k h_{i,j}
( \nabla \widetilde{e}_{\textbf{m}}^{l+i-k} ,  \nabla \widetilde{e}_{\textbf{m}}^{l+j-k} ) \notag \\
& +\sum\limits_{i,j=1}^k h_{i,j}
(  \widetilde{e}_{\textbf{m}}^{l+1+i-k } ,  \widetilde{e}_{\textbf{m}}^{l+1+j-k } ) - \sum\limits_{i,j=1}^k h_{i,j}
( \widetilde{e}_{\textbf{m}}^{l+i-k} ,  \widetilde{e}_{\textbf{m}}^{l+j-k} ) \notag \\
& +  \Delta t \gamma \bigg( (\alpha_k(s)^2+\tau_k\alpha_k(s))\| \Delta C^s_k(\widetilde{e}_{\textbf{m}}^{l+1}) \|^2+\|\Delta D^s_k(\widetilde{e}_{\textbf{m}}^{l+1})\|^2  \label{e_error14}\\ 
& +(2\alpha_k + \tau_k)\sum\limits_{i,j=1}^k g^2_{i,j}(
( \Delta \widetilde{e}_{\textbf{m}}^{l+1+i-k }) ,  \Delta \widetilde{e}_{\textbf{m}}^{l+1+j-k } ) 
- ( \Delta \widetilde{e}_{\textbf{m}}^{l+i-k} ,  \Delta \widetilde{e}_{\textbf{m}}^{l+j-k} ) \bigg) \notag \\
\leq &  (7 \epsilon +\frac{\tau_k|\beta|\sqrt{\alpha_k(s)^2+\tau_k\alpha_k(s)}}{2}) \Delta t \|  \Delta  C^s_k(\widetilde{e}_{\textbf{m}}^{l+1}) \|_{L^2}^2 + \epsilon \Delta t\|  \Delta  C^s_k(\widetilde{e}_{\textbf{m}}^{l}) \|_{L^2}^2 \notag\\
& + (7\epsilon + \frac{\tau_k|\beta|}{2\sqrt{\alpha_k(s)^2+\tau_k\alpha_k(s)}}) \Delta t \|  \Delta  D^s_k(\widetilde{e}_{\textbf{m}}^{l+1}) \|_{L^2}^2+ C \Delta t ((\Delta t )^{2k}  +\sum^{k}_{i = 1}\| \widetilde{e}_{\textbf{m}}^{l + k - i}  \|^2 _{H^1} )\notag.
\end{align}

Summing \eqref{e_error14} over $l$ from $k-1$ to $n$, and thanks to Lemma \ref{lem: multiplier step},  $H=(h_{i,j})$ and $G^2 = (g^2_{i,j})$ are symmetric positive definite matrices with minimum eigenvalues $\lambda^1_{\min},~\lambda^2_{\min}$ and $\lambda_{\min} = \min \{\lambda_{\min}^1,~\lambda_{\min}^2\}$, we obtain
{\color{black}  
  \begin{align}\label{e_error15}
& \left( (\alpha_k(s)^2+\tau_k\alpha_k(s))\gamma - \frac{\tau_k|\beta|\sqrt{\alpha_k(s)^2+\tau_k\alpha_k(s)}}{2}
-  8 \epsilon \right)  \Delta t  \sum\limits_{l=k-1}^{n} \| \Delta C^s_k(\widetilde{e}_{\textbf{m}}^{l+1}) \|^2 \notag\\
&\quad+(\gamma-\frac{\tau_k|\beta|}{2\sqrt{\alpha_k(s)^2+\tau_k\alpha_k(s)}}-7\epsilon)\Delta t \sum^n_{l=k-1} \|\Delta D^s_k(\widetilde{e}^{l+1}_{\mathbf{m}})\|^2\\
&\quad + \lambda_{\min} \|\widetilde{e}_{\textbf{m}}^{n+1 }\|_{H^1}^2+ \Delta t \gamma \lambda_{\min} \| \Delta \widetilde{e}_{\textbf{m}}^{n+1}\|^2 \leq  C \Delta t \sum\limits_{l=0}^{n-1}   \|\widetilde{e}_{\textbf{m}}^{l} \|_{H^1}^2 + C (\Delta t) ^{2k}.\notag
\end{align}

Using the condition \eqref{e_beta_value2} and setting 
$$ \displaystyle \epsilon = \min\left\{1,\,\sqrt{\alpha_k(s)^2+\tau_k\alpha_k(s)}\right\}(\gamma-\frac{\tau_k|\beta|}{2\sqrt{\alpha_k(s)^2+\tau_k\alpha_k(s)}}) /16,$$ 
}
and applying the discrete Gronwall Lemma \ref{lem: gronwall2} to \eqref{e_error15}, we can arrive at
 \begin{align}\label{e_error_final_H2}
\aligned
& \|\widetilde{e}_{\textbf{m}}^{n+1} \|_{H^1}^2 + \Delta t \| \Delta \widetilde{e}_{\textbf{m}}^{n+1}\|^2
\leq C_3 (\Delta t)^{2k},  k=2,3,4,
\endaligned
\end{align}  
where $C_3$ is independent of $\Delta t$ and step $n$.

\noindent{\bf Step 2: Completion of  the induction process.} 

Letting 
\begin{equation}\label{epsilon1}
    \displaystyle \Delta t \leq \min\{1,(\epsilon')^4,(\frac{\epsilon}{C_1})^{\frac{1}{k}},(\frac{\epsilon}{C_2})^4,(\frac{1}{C_3})^{\frac{1}{2k-2}}\} =: \epsilon_1, 
\end{equation} 
we can obtain
\begin{align}\label{e_error_final_induction6}
\|  \widetilde{e}_{\textbf{m}}^{n+1} \|_{H^1}  +  \| \Delta \widetilde{e}_{\textbf{m}}^{n+1} \| 
& \leq \sqrt{C_3 (\Delta t)^{2k - 2}} (\Delta t)^{1/2}  \leq (\Delta t)^{1/4} ,
\end{align} 
which completes the induction process \eqref{e_error2}.

Recalling Lemma \ref{lem: relation for error functions}, we have the final results
\begin{equation}  \label{e_final_m}
\aligned
\|  e_\textbf{m}^{n+1}\|^2 + \|  \nabla e_\textbf{m}^{n+1}\|^2  \leq  C(\Delta t)^{2k}, \ \forall\,0\leq n\leq N-1,\ 2\le k\le 4,
\endaligned
\end{equation} 
where $C$ is a positive constant  independent of $\Delta t$.
The proof of Theorem \ref{the: error_estimate_final_semi} is completed.
\end{proof}
\subsection{Error analysis for the fully explicit scheme}  

In this subsection, we establish error estimates for the scheme \eqref{e_High-order1}, where the term $\beta \textbf{m}\times \Delta \textbf{m}$ is treated explicitly under the condition $\gamma > | \beta |/  \alpha_k$. We first recall  \eqref{e_High-order1} 
\begin{equation*}
	\aligned
	\frac{1}{\Delta t}A_k^s (\widetilde{\textbf{m}}^{n+1}) = & \gamma \Delta B^s_k(\widetilde{\textbf{m}}^{n+1})+ \gamma | \nabla C^s_k( \widetilde{\textbf{m}}^{n})  |^2 C^s_k( \textbf{m}^{n} )- \beta C^s_k( \textbf{m}^{n} ) \times \Delta  C^s_k(\widetilde{\textbf{m}}^{n}).
	\endaligned
\end{equation*} 
\medskip
\begin{theorem}\label{the: error_estimate_final}
 Supposing that the damping parameter $\gamma$ satisfies $\gamma > | \beta |/  \alpha_k$, and the time step $\Delta t \leq \epsilon_2$ and assuming  $\textbf{m}\in H^{k+1}(0,T;\textbf{L}^2(\Omega))\bigcap H^{k}(0,T;\textbf{H}^{2}(\Omega)) \bigcap L^{\infty}(0,T;\textbf{W}^{2,\,3}(\Omega))$, then for the fully discrete scheme \eqref{e_High-order1}, we have
 \begin{equation}  \label{e_final_fully}
\aligned
\|  e_\textbf{m}^{n+1}\|^2 + \|  \nabla e_\textbf{m}^{n+1}\|^2  \leq  C(\Delta t)^{2k}, \ \forall\,0\leq n\leq N-1,\ 2\le k\le 4,
\endaligned
\end{equation} 
where $C$ and $\epsilon_2$ are two positive constants  independent of $\Delta t$.
\end{theorem}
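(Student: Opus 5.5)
The proof will follow the semi-implicit case, with two structural changes: the multiplier and the treatment of the gyromagnetic term. Since $\Delta C^s_k(\widetilde{\mathbf m}^n)$ is now explicit, there is no cross-product cancellation against $B^s_k$. We therefore use the multiplier $-\Delta t\,\Delta C^s_k(\widetilde{e}^{l+1}_{\mathbf m})$ from Lemma \ref{lem: multiplier step} instead of $B^s_k+\tau_k C^s_k$. As before, we run an induction/bootstrap on the hypothesis $\|\widetilde e^l_{\mathbf m}\|_{H^2}\le(\Delta t)^{1/4}$ for $l\le n$. This hypothesis makes Lemma \ref{lem: relation for error functions} applicable and gives uniform $L^\infty$ and $W^{1,6}$ bounds on $C^s_k(\mathbf m^l)$ and $\nabla C^s_k(\widetilde{\mathbf m}^l)$.

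\textbf{Step 1: the error equation and the energy identity.} We write the error equation analogous to \eqref{e_error7}, with truncation error $O(\Delta t^k)$. The gyromagnetic term is now $\mathbf m_e^{l+s}\times\Delta \mathbf m_e^{l+s}-C^s_k(\mathbf m^l)\times\Delta C^s_k(\widetilde{\mathbf m}^l)$. We test with $-\Delta t\,\Delta C^s_k(\widetilde e^{l+1}_{\mathbf m})$ and use $B^s_k=\alpha_kC^s_k+D^s_k$. The first identity of Lemma \ref{lem: multiplier step} handles $(A^s_k(\nabla\widetilde e),\nabla C^s_k(\widetilde e))$ and yields a telescoping $G_1$-form. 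The damping part gives
\[
\alpha_k\gamma\Delta t\|\Delta C^s_k(\widetilde e^{l+1}_{\mathbf m})\|^2+\gamma\Delta t\,(\Delta D^s_k,\Delta C^s_k),
\]
and the second identity of Lemma \ref{lem: multiplier step} turns the latter into a telescoping $G_2$-form plus a nonnegative square. This works for $s\ge1$ when $k=2,3$ and for $s\ge2$ when $k=4$.

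\textbf{Step 2: the gyromagnetic term.} This is the main obstacle, since it is what produces the condition $\gamma>|\beta|/\alpha_k$. We decompose it as
\[
\mathbf m_e^{l+s}\times\Delta C^s_k(\widetilde e^{l})+\big(\mathbf m_e^{l+s}-C^s_k(\mathbf m_e^l)\big)\times\Delta(\cdot)+C^s_k(e^l_{\mathbf m})\times\Delta(\cdot)+\text{consistency terms}.
\]
The leading piece cannot vanish, because it pairs $\Delta C^s_k(\widetilde e^{l})$ with $\Delta C^s_k(\widetilde e^{l+1})$. Using $|\mathbf m_e|=1$ and Young's inequality, we bound it by
\[
\tfrac{|\beta|}{2}\Delta t\big(\|\Delta C^s_k(\widetilde e^{l+1})\|^2+\|\Delta C^s_k(\widetilde e^{l})\|^2\big).
\]
After summation over $l$, this costs $|\beta|\Delta t\sum\|\Delta C^s_k\|^2$, which is absorbed by $\alpha_k\gamma$ exactly when $\gamma>|\beta|/\alpha_k$. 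Two further pieces carry an extra small factor and are absorbed for $\Delta t$ small: the $\mathbf m_e^{l+s}-C^s_k(\mathbf m_e^l)$ piece is $O(\Delta t^k)$ in $L^\infty$, and the $C^s_k(e^l)$ piece is $O(\Delta t^{1/4})$ in $L^\infty$ by the induction hypothesis.

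\textbf{Step 3: the remaining terms and Gronwall.} The $\gamma$-nonlinear terms, the exact-solution cross terms, and the truncation term are bounded exactly as $I_1,I_2,I_4,I_6,I_8$ were. That is, we take $\epsilon\|\Delta C^s_k(\widetilde e^{l+1})\|^2+\epsilon\|\Delta C^s_k(\widetilde e^{l})\|^2$ plus $C\sum_i\|\widetilde e^{l+k-i}\|_{H^1}^2+C\Delta t^{2k}$, with the regularity assumptions on $\mathbf m$. The $L^2$ part of the norm is added by also testing the error equation with $\Delta t\,\widetilde e^{l+1}_{\mathbf m}$. We then sum over $l$ and use positive definiteness of $G_1,G_2$ through $\lambda_{\min}$. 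We choose $\epsilon$ proportional to $\alpha_k\gamma-|\beta|$ and apply Lemma \ref{lem: gronwall2}, which gives
\[
\|\widetilde e^{n+1}_{\mathbf m}\|_{H^1}^2+\Delta t\|\Delta\widetilde e^{n+1}_{\mathbf m}\|^2\le C\Delta t^{2k}.
\]
A smallness condition $\Delta t\le\epsilon_2$, analogous to \eqref{epsilon1}, closes the bootstrap. Lemma \ref{lem: relation for error functions} then transfers the estimate to $e^{n+1}_{\mathbf m}$.
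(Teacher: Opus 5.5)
Your argument follows the paper's proof in its essential steps. These are the multiplier $-\Delta t\,\Delta C^s_k(\widetilde e^{l+1}_{\mathbf m})$, the two identities of Lemma~\ref{lem: multiplier step} giving the $G_1$- and $G_2$-forms plus the term $\alpha_k\gamma\Delta t\|\Delta C^s_k(\widetilde e^{l+1}_{\mathbf m})\|^2$, the Young bound $\frac{|\beta|}{2}\bigl(\|\Delta C^s_k(\widetilde e^{l}_{\mathbf m})\|^2+\|\Delta C^s_k(\widetilde e^{l+1}_{\mathbf m})\|^2\bigr)$ on the leading gyromagnetic term, and absorption under $\alpha_k\gamma>|\beta|$.

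One step fails as written: the $L^2$ part cannot come from testing with $\Delta t\,\widetilde e^{l+1}_{\mathbf m}$. The hypothesis forces $s>1$, since $\alpha_k(1)=0$. For $s>1$ the pair $(\widetilde A^s_k(\zeta),\zeta^k)$ violates Dahlquist's sign condition near $\zeta=1$. From $\widetilde A^s_k(e^{z})=z\,e^{(k-1+s)z}+O(z^{k+1})$ one gets $\mathrm{Re}\bigl[\widetilde A^s_k(e^{i\theta})e^{-ik\theta}\bigr]=-(s-1)\theta^2+O(\theta^3)<0$ for small $\theta\neq0$. For $k=2$ it equals exactly $(1-y)(1-(2s-1)y)$ with $y=\cos\theta$. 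This sign condition is necessary: apply an identity of the form \eqref{G-conclusion} to $v^j=\zeta^j$ with $|\zeta|=1$ and take real parts; the $G$-terms cancel, leaving $|\sum_j\delta_j\zeta^j|^2\ge0$. Hence $(A^s_k(\widetilde e^{l+1}_{\mathbf m}),\widetilde e^{l+1}_{\mathbf m})$ admits no decomposition of that form with any symmetric $G$. The positive definiteness you invoke for the $L^2$ part therefore has nothing to attach to.

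This is not cosmetic. Under periodic or Neumann conditions the gradient form does not control $\|\widetilde e_{\mathbf m}\|$. Both the Gronwall step, whose right side carries $\|\widetilde e^{l}_{\mathbf m}\|_{H^1}^2$, and the bootstrap need the full $H^1$ norm. The paper's semi-implicit proof uses the same loose wording, but its proof of this theorem tests additionally with $\Delta t\,C^s_k(\widetilde e^{l+1}_{\mathbf m})$, and that is the repair. With this multiplier:
\begin{itemize}
\item the first identity of Lemma~\ref{lem: multiplier step}, applied in $L^2$, supplies the $G_1$-form;
\item the damping contributes $\gamma\Delta t\bigl(\alpha_k\|\nabla C^s_k(\widetilde e^{l+1}_{\mathbf m})\|^2+(\nabla D^s_k(\widetilde e^{l+1}_{\mathbf m}),\nabla C^s_k(\widetilde e^{l+1}_{\mathbf m}))\bigr)$, which the second identity handles;
\item every right-hand-side term is lower order. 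For example, the gyromagnetic term is bounded by $\epsilon\Delta t\|\Delta C^s_k(\widetilde e^{l}_{\mathbf m})\|^2+C\Delta t\|C^s_k(\widetilde e^{l+1}_{\mathbf m})\|^2$, and its current-level part is absorbed for small $\Delta t$.
\end{itemize}
With this substitution the rest of your proposal goes through as in the paper.
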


\begin{proof}
The proof of Theorem \ref{the: error_estimate_final} is exactly the same as Theorem \ref{the: error_estimate_final_semi}, except for the highly nonlinear term with exchange parameter $\beta \ne 0$, so for the sake of brevity our focus will be on the this term.

At this point, we take the inner product of the error equation with \(-\Delta t\,\Delta C^s_k(\widetilde{e}^{l+1})\) and \(\Delta t\, C^s_k(\widetilde{e}^{l+1})\). Similarly to the error analysis for the semi-implicit scheme, we can obtain the estimates of \(I_i\) (\(1 \leq i \leq 8\)) with particular emphasis on the estimate of the third term and the coefficient of the left-hand side stability term, which are considered fundamental. By applying the Cauchy-Schwarz inequality, we have
\begin{align}\label{e_error_betafully}
|I_3| &= | \beta | \left|  \left(   \textbf{m}^{l+s}_e \times  \Delta C^s_k(\widetilde{e}_{\textbf{m}}^{l}) , \Delta C^s_k(  \widetilde{e}_{\textbf{m}}^{l+1}) \right)\right| \notag\\
&\leq  \frac{|\beta|}{2}\|\Delta C^s_k(\widetilde{e}_{\textbf{m}}^{l})\|^2_{L^2} + \frac{|\beta|}{2}\|\Delta C^s_k(\widetilde{e}^{l+1}_\textbf{m})\|^2_{L^2}.
\end{align}
Applying Lemma \ref{lem: multiplier step}, the stability term on the left-hand side leads to
\begin{align}
&\textcolor{black}{(A^s_k(\widetilde{e}^{l+1}_{\textbf{m}}) -  \gamma \Delta t \Delta B^s_k(\widetilde{e}_{\textbf{m}}^{l+1}),-\Delta C^s_k(\widetilde{e}_{\textbf{m}}^{l+1})) + (A^s_k(\widetilde{e}^{l+1}_{\textbf{m}}), C^s_k(\widetilde{e}_{\textbf{m}}^{l+1}))}\notag\\
&\textcolor{black}{=(A^s_k(\widetilde{e}^{l+1}_{\textbf{m}}) -  \gamma \Delta t   \Delta(\alpha_k C^s_k(\widetilde{e}_{\textbf{m}}^{l+1}) +  D^s_k(\widetilde{e}_{\textbf{m}}^{l+1})),-\Delta C^s_k(\widetilde{e}_{\textbf{m}}^{l+1})) + (A^s_k(\widetilde{e}^{l+1}_{\textbf{m}}), C^s_k(\widetilde{e}_{\textbf{m}}^{l+1}))}\notag\\
&= \sum\limits_{i,j=1}^k g^1_{i,j}
(( \nabla \widetilde{e}_{\textbf{m}}^{l+1+i-k } ,  \nabla \widetilde{e}_{\textbf{m}}^{l+1+j-k } ) -
( \nabla \widetilde{e}_{\textbf{m}}^{l+i-k} ,  \nabla \widetilde{e}_{\textbf{m}}^{l+j-k} ) )\\
& +\sum\limits_{i,j=1}^k g^1_{i,j}
((  \widetilde{e}_{\textbf{m}}^{l+1+i-k } ,  \widetilde{e}_{\textbf{m}}^{l+1+j-k } ) - 
( \widetilde{e}_{\textbf{m}}^{l+i-k} ,  \widetilde{e}_{\textbf{m}}^{l+j-k} ) ) +  \Delta t \gamma \bigg( \alpha_k(s)\| \Delta C^s_k(\widetilde{e}_{\textbf{m}}^{l+1}) \|^2  \notag \\ 
& +\sum\limits_{i,j=1}^k g^2_{i,j}(
( \Delta \widetilde{e}_{\textbf{m}}^{l+1+i-k }) ,  \Delta \widetilde{e}_{\textbf{m}}^{l+1+j-k } ) 
- ( \Delta \widetilde{e}_{\textbf{m}}^{l+i-k} ,  \Delta \widetilde{e}_{\textbf{m}}^{l+j-k} ) \bigg).\notag
\end{align}

The subsequent analysis proceeds in a manner analogous to that of Theorem \ref{the: error_estimate_final_semi},  with the principal distinction that the condition \(\gamma > |\beta| /\alpha_k\) should be satisfied.
\end{proof}


\section{Extension to fifth order accuracy}
In Lemma \ref{GBDFlem}, we identify appropriate multipliers for the second-, third- and fourth-order schemes with $s \geq 2$. In this section, we aim to demonstrate the suitable multiplier for the fifth-order scheme with a fixed $s = 5$.

\begin{lemma}\label{GBDFlem-5}
     Given $s = 5$, there exists symmetric positive definite matrix $H = (h_{i,j}) \in \mathbb{R}^{5\times 5}$ and $\tau_5$ such that
     \begin{equation}\label{GBDF5}
     \aligned
         &\left(A^s_5(\phi^{n+1}),B^s_5(\phi^{n+1}) + \tau_5 C^s_5(\phi^{n+1})\right) \\
         &= \sum^5_{i,j=1}h_{i,j}(\phi^{n-5+1+i},\phi^{n-5+1+j}) - \sum^5_{i,j=1}h_{i,j}(\phi^{n-5+i},\phi^{n-5+j}) +\|\sum^5_{i=0}\theta_i \phi^{n-5+1+i}\|^2,
     \endaligned
     \end{equation}
     where $\theta_0,\cdots,\theta_5$ are real and $\tau_5 = 0.6$.
 \end{lemma}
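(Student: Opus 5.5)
The plan follows the $k=3,4$ cases of Lemma \ref{GBDFlem}. Fix $s=5$ and $\tau_5=0.6$, so every coefficient is an explicit rational number. The fifth-order GBDF coefficients $a_{5,q}(5)$, $b_{5,q}(5)$, $c_{5,q}(5)$ come from \cite{huang2024new}. Form
\begin{equation*}
\widetilde{A}^5_5(\zeta)=\sum_{q=0}^5 a_{5,q}\zeta^q,\qquad \widetilde{Z}^5_5(\zeta)=\sum_{q=0}^{4}(b_{5,q}+\tau_5c_{5,q})\zeta^{q+1}=\zeta\, h(\zeta),
\end{equation*}
where $h$ is a quartic with rational coefficients. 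The goal is to check the hypotheses of Lemma \ref{definite G} with $\eta=\widetilde{A}^5_5$ and $\mu=\widetilde{Z}^5_5$. Once they hold, that lemma yields a symmetric positive definite $H\in\mathbb{R}^{5\times5}$ and reals $\theta_0,\dots,\theta_5$ with \eqref{GBDF5}.

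\textbf{Step 1 (coprimality).} Write down the $10\times 10$ Sylvester matrix of $\widetilde{A}^5_5$ and $\widetilde{Z}^5_5$. The constant term of $\widetilde{Z}^5_5$ vanishes, exactly as for $k=3,4$. Evaluate its determinant in exact rational arithmetic and confirm it is nonzero. Because $s$ is fixed, this is a single numerical certificate rather than a sign analysis in $s$.

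\textbf{Step 2 (zeros of $\widetilde{Z}^5_5$ inside the unit disk).} The root $\zeta=0$ is trivial. For the quartic $h$, I would apply the Schur--Cohn (Jury) test with exact rationals, which certifies that all four roots satisfy $|\zeta|<1$. The root-counting argument used for $k=4$ (monotonicity plus Vieta) does not extend cleanly to degree four, so Jury is the safer choice. Together with Step 1, this makes $\widetilde{A}^5_5/\widetilde{Z}^5_5$ holomorphic for $|\zeta|>1$. Its limit at infinity is $a_{5,5}/z_{5,5}$, and I would check directly that this is positive. The maximum principle for harmonic functions then reduces \eqref{Re>0} to the boundary condition
\begin{equation*}
\mathrm{Re}\big[\widetilde{A}^5_5(e^{i\theta})\widetilde{Z}^5_5(e^{-i\theta})\big]\ge 0,\qquad \theta\in[0,2\pi).
\end{equation*}

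\textbf{Step 3 (boundary positivity).} Set $y=\cos\theta$ and use \eqref{Chebyshev} together with $\cos5\theta=16y^5-20y^3+5y$. The real part becomes a real polynomial of degree five in $y$. Consistency, i.e. $\widetilde{A}^5_5(1)=0$, forces it to vanish at $y=1$, so it factors as $(1-y)f(y)$ with $f$ a quartic having explicit rational coefficients $\omega_0,\dots,\omega_4$. It remains to show $f(y)\ge0$ on $[-1,1]$.

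This step is the main obstacle, since $\tau_5=0.6$ was presumably tuned to make $f$ only barely nonnegative. I would first check $f(\pm1)>0$. Then I would locate the real critical points of $f$ in $[-1,1]$ by Sturm sequences applied to the cubic $f'$, and verify $f>0$ at each critical point through rational enclosures of the critical points. As an alternative certificate, I would try writing $f(y)=\sigma_0(y)+(1-y^2)\sigma_1(y)$ with $\sigma_0,\sigma_1$ sums of squares, with rational coefficients found numerically and then verified exactly. Either route gives $f\ge0$ on $[-1,1]$. Lemma \ref{definite G} then concludes \eqref{GBDF5}.
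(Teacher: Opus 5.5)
Your proposal is correct and follows the same route as the paper: Lemma~\ref{definite G} applied to $\widetilde{A}^5_5$ and $\widetilde{Z}^5_5$, with coprimality via the Sylvester determinant, all zeros of $\widetilde{Z}^5_5$ inside the unit disk, the maximum principle, and positivity of the quartic $f$ on $[-1,1]$ after factoring out $(1-y)$. The differences are only in how the computational facts are certified: you use an exact Schur--Cohn test where the paper reports numerically computed roots with $r_{\max}<0.96$, and Sturm sequences or an SOS certificate where the paper reduces $f$ at a critical point to a quadratic via $4\omega_4 f-(\omega_4 y+\omega_3/4)f'$ and checks sign patterns; if anything, your certificates are more rigorous.
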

 \begin{proof}
Following the same notation as previously established, the coefficients can be obtained from \cite{huang2024new}. Additionally, the term 
 $B^s_5(\phi^{n+1})$ can be decomposed as follows
     \begin{equation}\label{GBDF_5}
         B^s_5(\phi^{n+1}) = \alpha_5(s)C^s_5(\phi^{n+1}) + D^s_5(\phi^{n+1}),~~\rm{with}~\alpha_5(s) = \frac{s-1}{s + 15}.
     \end{equation}
     We first show $gcd(\widetilde{A}^s_5(\zeta), \widetilde{Z}^s_5(\zeta)) = 1$ by using the $Sylvester~resultant$ as follows
     \begin{equation}
         det(Sly(\widetilde{A}^s_5,\widetilde{Z}^s_5)) \neq 0,~for~s = 5.
     \end{equation}
     Next we check $\frac{\widetilde{A}^s_5(\zeta)}{\widetilde{Z}^s_5(\zeta)}$ is holomorphic outside the unit disk in the complex plane numerically. Let $r_1,~r_2,\cdots,r_5$ be the five roots of $\widetilde{Z}^s_5(\zeta)$ and set $r_{max} = \max\limits_{1\leq i \leq5}|r_i|$. We can get that
     \begin{equation}
         r_{max} < 0.96 <1,
     \end{equation}
     thus $\frac{\widetilde{A}^s_5(\zeta)}{\widetilde{Z}^s_5(\zeta)}$ is holomorphic outside the unit disk.
     
     Similarly, it follows from the maximum principle for harmonic function that $Re \frac{\widetilde{A}^s_5(\zeta)}{\widetilde{Z}^s_5(\zeta)} > 0$ for all $|\zeta| > 1$ is equivalent to 
     \begin{equation}\label{Re_5}
    Re[\widetilde{A}^s_5(e^{i\theta})\widetilde{Z}^s_5(e^{-i\theta})] \geq 0.
\end{equation}
Letting $y=cos \theta$ and using \eqref{Chebyshev}, we find that
\begin{equation}
    \aligned
    Re(\widetilde{A}^s_5(e^{i\theta})\widetilde{Z}_5^s(e^{-i\theta})) = (1-y)(\omega_4y^4 +\omega_3y^3  + \omega_2y^2+\omega_1y+\omega_0)=:(1-y)f(y),
    \endaligned
\end{equation}
with $w_4 >0, \ w_3<0, \ w_2>0, \ w_1<0, \ \omega_0>0$.

It is clear that \eqref{Re_5} is equivalent to 
\begin{equation}\label{Re_5*}
    \aligned
    f(y)\geq0,~\forall~y \in[-1,1].
    \endaligned
\end{equation}
With $\omega$ defined in \eqref{w-de4} and $s \geq 2$, we have
\begin{align}
    f(1) >0,~f(-1) >0,
\end{align}
and
\begin{equation}
    f'(y) = 4\omega_4y^3+3\omega_3y^2 + 2\omega_2y+\omega_1.
\end{equation}
If $f'(y)$ does not have zero in $[-1,1]$, then we can get \eqref{Re_4*}. Otherwise, supposing that there exists $-1 \leq y \leq 1$ such that $f'(y_0) = 0$, we only need to show $f(y_0)\geq 0$. Indeed, with $f'(y_0) = 0$, we have
\begin{equation}
    \aligned
    4 \omega_4f(y_0) = \omega_4(4f(y_0) - y_0f'(y_0)) - \frac{\omega_3}{4}f'(y_0),
    \endaligned
\end{equation}
then
\begin{equation}
    \aligned
    4 \omega_4f(y_0)= (2\omega_2\omega_4 - \frac{3}{4}\omega^2_3)y_0^2 + (3\omega_1\omega_4 - \frac{1}{2}\omega_2\omega_3)y_0 + 4 \omega_0\omega_4 - \frac{1}{4}\omega_1\omega_3,
    \endaligned
\end{equation}
and we set $h(x) = (2\omega_2\omega_4 - \frac{3}{4}\omega^2_3)x^2 + (3\omega_1\omega_4 - \frac{1}{2}\omega_2\omega_3)x + 4 \omega_0\omega_4 - \frac{1}{4}\omega_1\omega_3$ with $2\omega_2\omega_4 - \frac{3}{4}\omega^2_3 < 0$. 

Let $y_1$ and $y_2$ be the two roots of $f''(y) = 0$ and $x_1$ and $x_2$ be the two roots of $h(x) = 0$. We can obtain $x_1 < y_1 < y_2 <1 < x_2$ and $f'(x_1) < 0,~f'(y_1) > 0,~f'(y_2)>0,~f'(1) >0$. Therefore, $y_0 \in (x_1, 1)$ and $4\omega_4f(y_0) >0$, which implies $f(y_0) > 0$ with $\omega_4 >0$. Furthermore, we established that $Re(\widetilde{A}^s_5(e^{i\theta})\widetilde{Z}^s_5(e^{-i\theta}))\geq 0$ and by using Lemma \ref{definite G}, we can get \eqref{GBDFk} for $k=5$.
 \end{proof}

 \section{Numerical experiments}
In this section, we present a series of numerical experiments and comparisons involving semi-implicit GBDF schemes, fully explicit GBDF schemes and classical BDF schemes, with the aim of elucidating two key findings to validate our theoretical analysis: (i) Compared with the classical BDF schemes, the GBDF schemes admit a larger time step size. (ii) Under the same conditions, the GBDF schemes impose less restrictive constraints on the damping parameter \(\gamma\).  We emphasize that the theoretical minimum values of $\gamma$ listed in Tables 1.1 and 1.2 are generally pessimistic as indicated by our numerical results below.

For simplicity, we fix the computational domain \(\Omega = [0,\,2\pi)^2\) with periodic boundary conditions. The Fourier spectral method is used for spatial discretization by utilizing a \(64 \times 64\) grid. During the experiments, our primary focus is on  the time step size, damping parameters \(\gamma\) and exchange coefficient \(\beta\). In the subsequent tests, unless otherwise specified, the third-, fourth-, and fifth-order methods correspond to \(s=3\), \(s=4\), and \(s=5\), respectively.

\textbf{Example 1 (with a known exact solution).} We test the convergence rate for the LLG equation \eqref{e_original model} with an external force, so that the exact solution is
\begin{equation*}
   \setlength{\abovedisplayskip}{4pt} 
    \left\{
    \begin{array}{l}
         m_e^x(x,y,t) = \sin(t+x)\cos(t+y),  \\
         m_e^y(x,y,t) = \cos(t+x)\cos(t+y),  \\
         m_e^z(x,y,t) = \sin(t+y).
    \end{array}  \right. 
    \setlength{\belowdisplayskip}{4pt} 
\end{equation*}

\textbf{Convergence rate.} For the case of \(\gamma = 1\) and \(\beta = 0.5\), we plot the convergence rates of the $\|\mathbf{e}^n\|_{L^{\infty}(L^2)}$ and $\|\nabla \mathbf{e}^n\|_{L^{\infty}(L^2)}$ for the fully explicit GBDF scheme (with \(k=3,4,5\)) at \(T=0.5\) in Figs. \ref{fig1}(a)-\ref{fig1}(c), and those for the semi-implicit GBDF scheme (with \(k=3,4,5\)) at \(T=0.5\) in Figs. \ref{fig2}(a)-\ref{fig2}(c). The results obtained using the fully explicit GBDF scheme are consistent with Theorem \ref{the: error_estimate_final}, whereas those obtained with the semi-implicit GBDF scheme agree with Theorem \ref{the: error_estimate_final_semi}.

\begin{figure}[htbp]
	\centering
    \subfigure[GBDF3]
	{\includegraphics[width=0.3\linewidth, trim = {0cm 0cm 0cm 0cm}, clip]{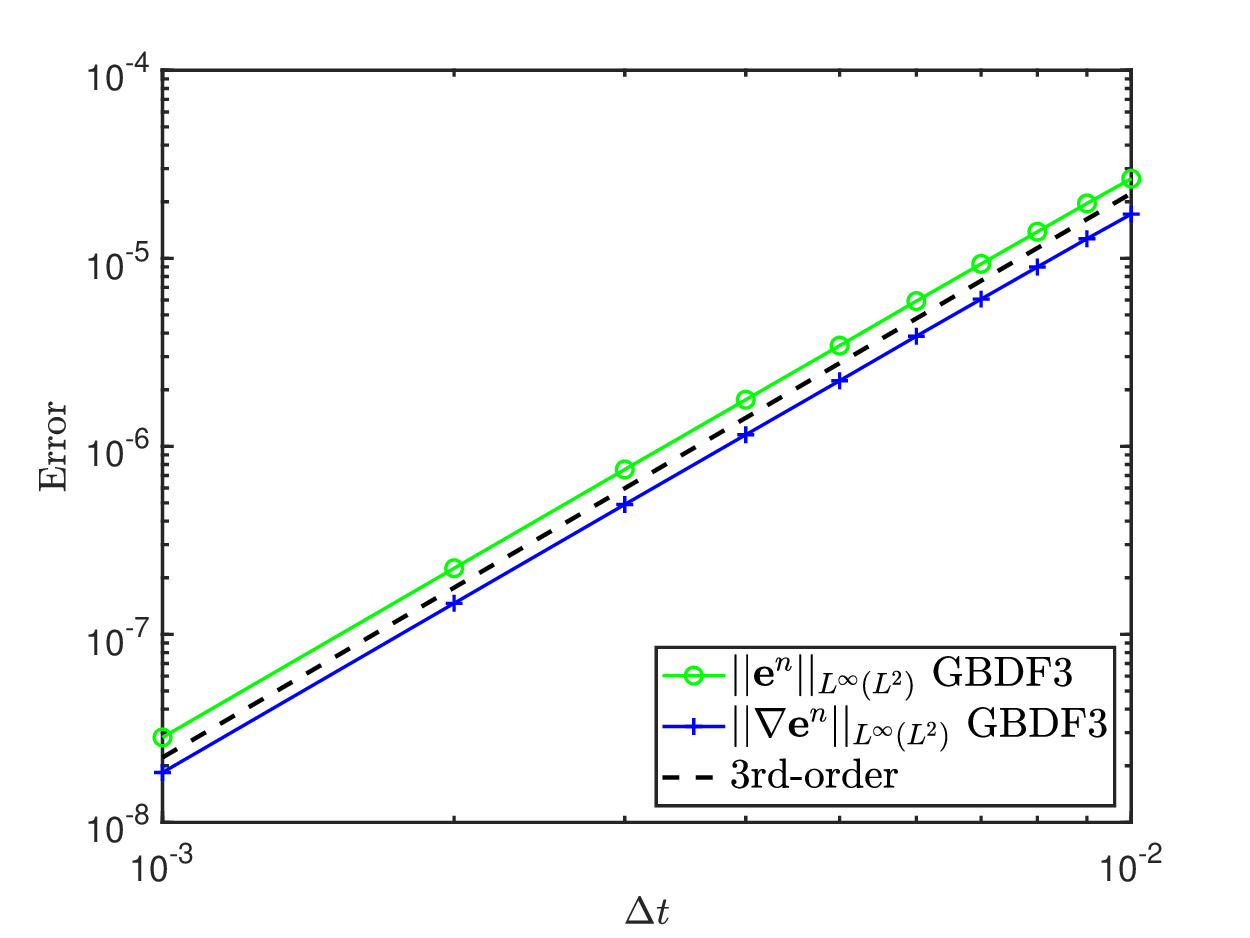}}
	\subfigure[GBDF4]
	{\includegraphics[width=0.3\linewidth, trim = {0cm 0cm 0cm 0cm}, clip]{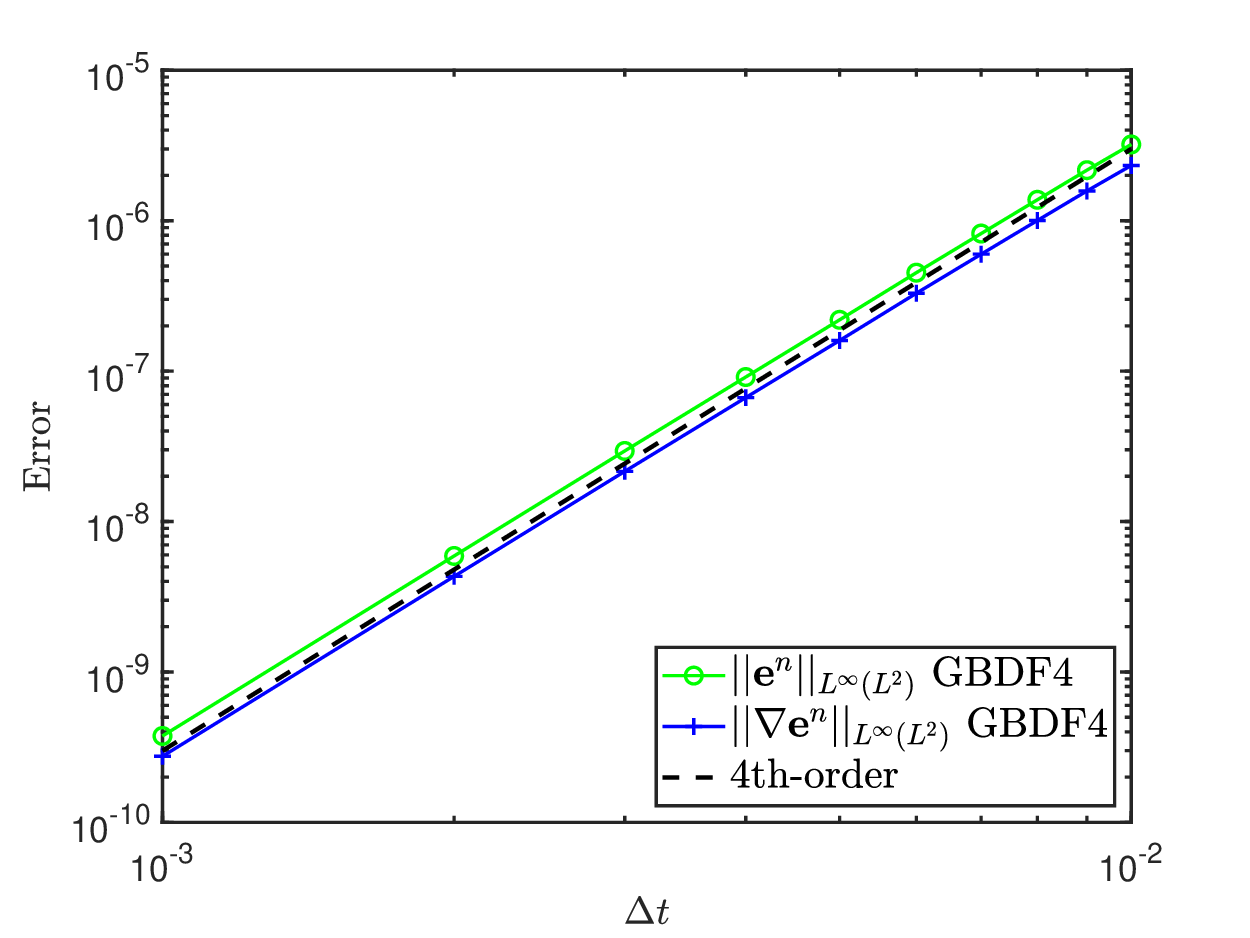}}
	\subfigure[GBDF5]
	{\includegraphics[width=0.3\linewidth, trim = {0cm 0cm 0cm 0cm}, clip]{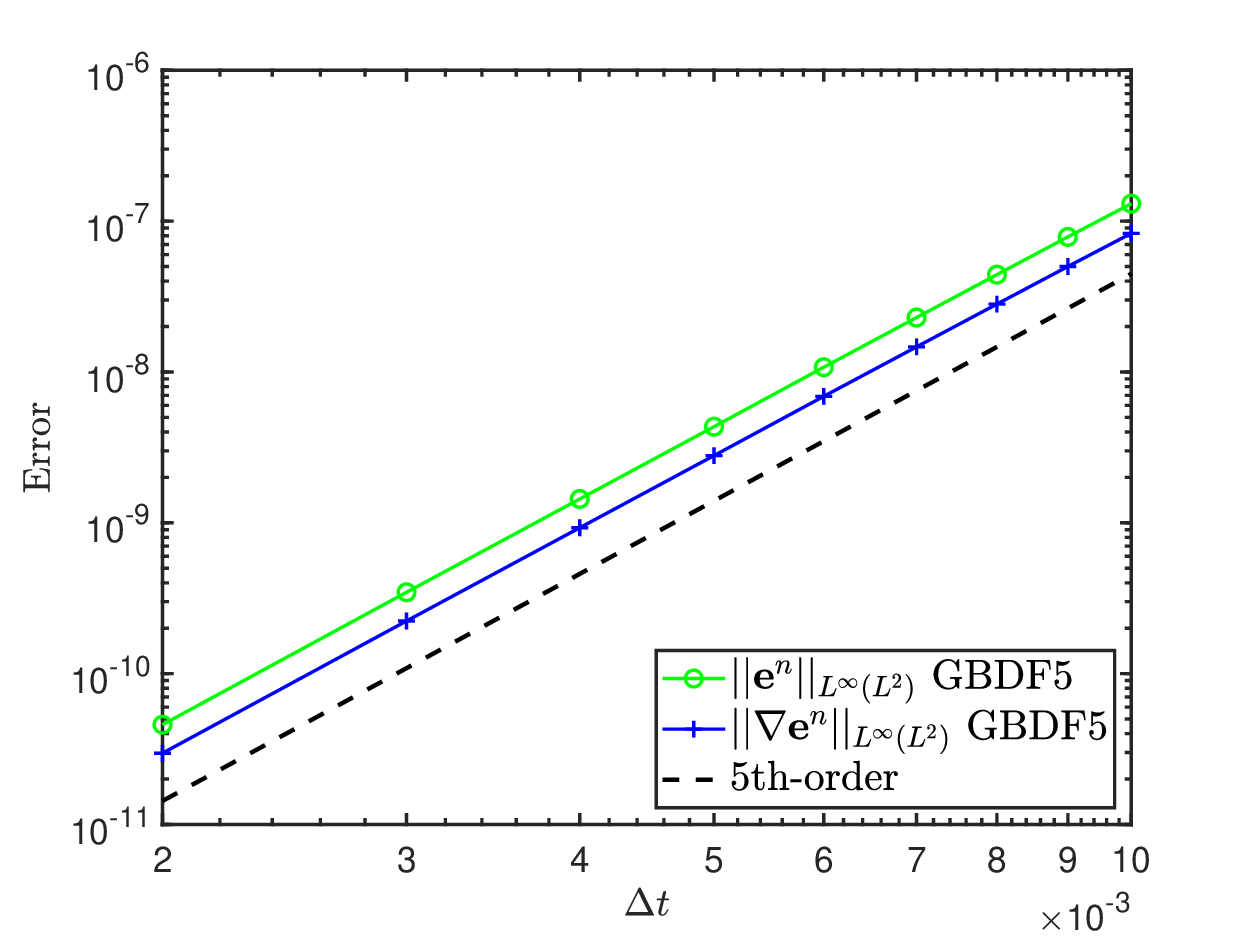}}
	\caption{Numerical convergence rate of the third- to fifth-order fully explicit GBDF schemes with $\gamma = 1,\,\beta = 0.5$}
	\label{fig1}
\end{figure}

\begin{figure}[htbp]
	\centering
    \subfigure[GBDF3]
	{\includegraphics[width=0.3\linewidth, trim = {0cm 0cm 0cm 0cm}, clip]{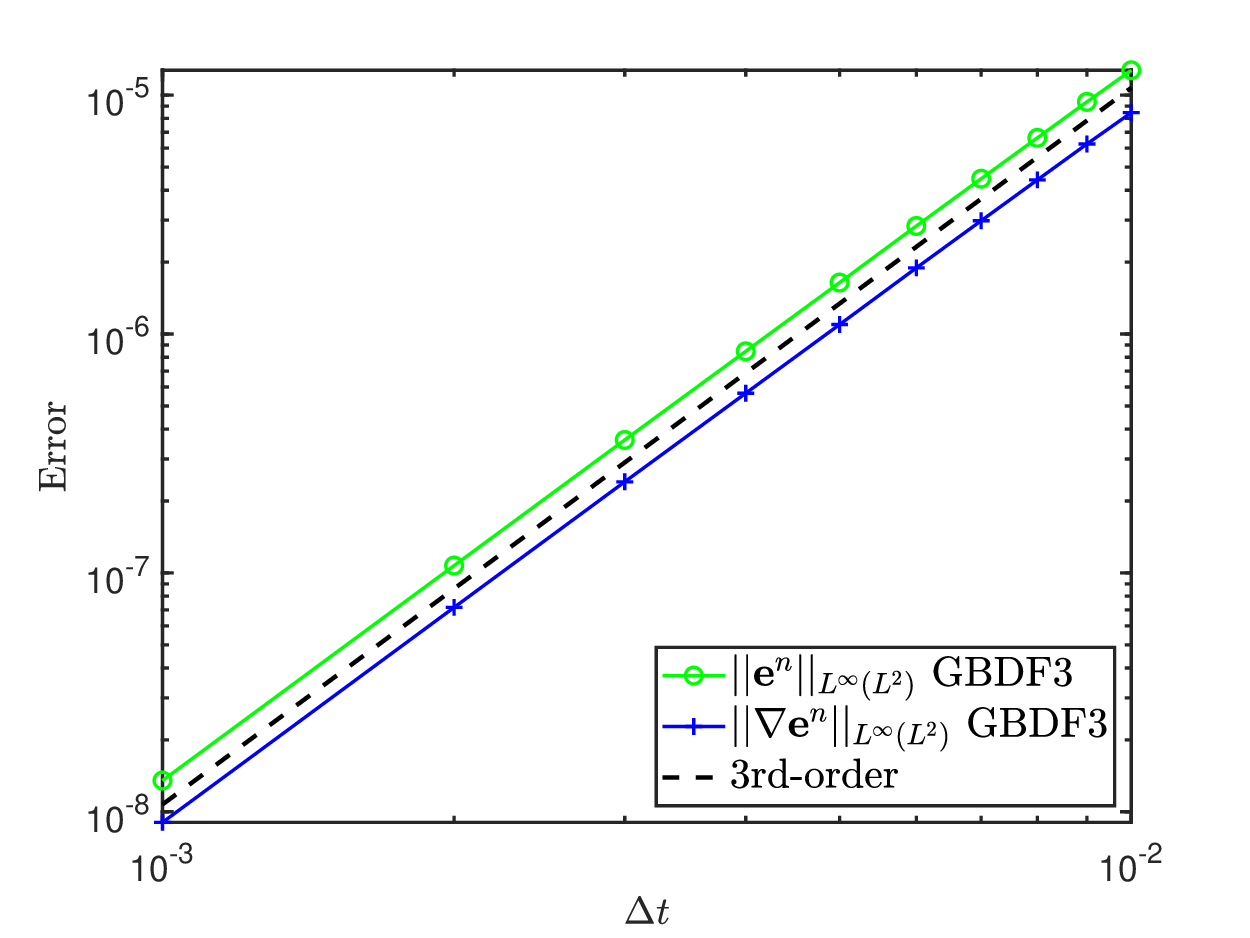}}
	\subfigure[GBDF4]
	{\includegraphics[width=0.3\linewidth, trim = {0cm 0cm 0cm 0cm}, clip]{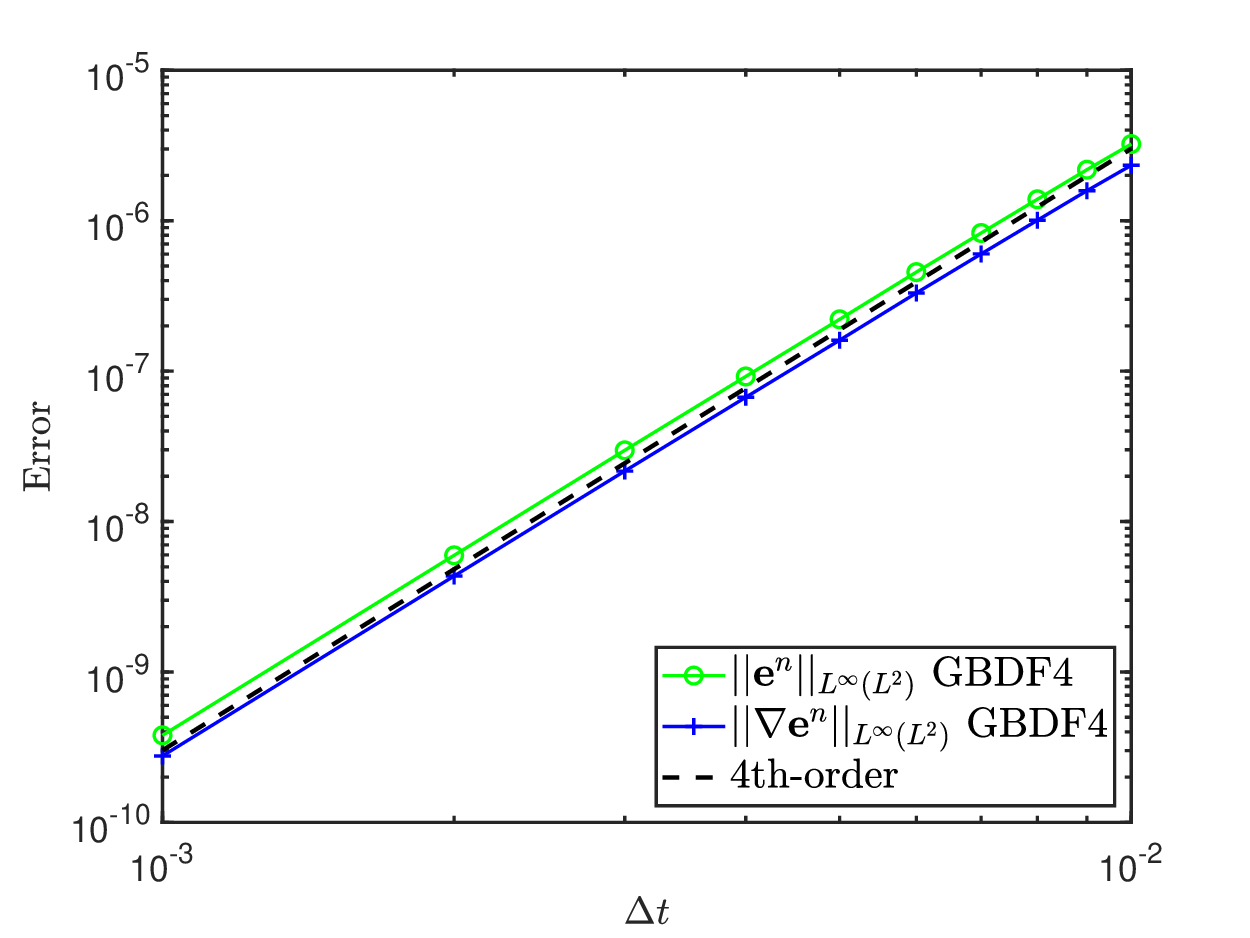}}
	\subfigure[GBDF5]
	{\includegraphics[width=0.3\linewidth, trim = {0cm 0cm 0cm 0cm}, clip]{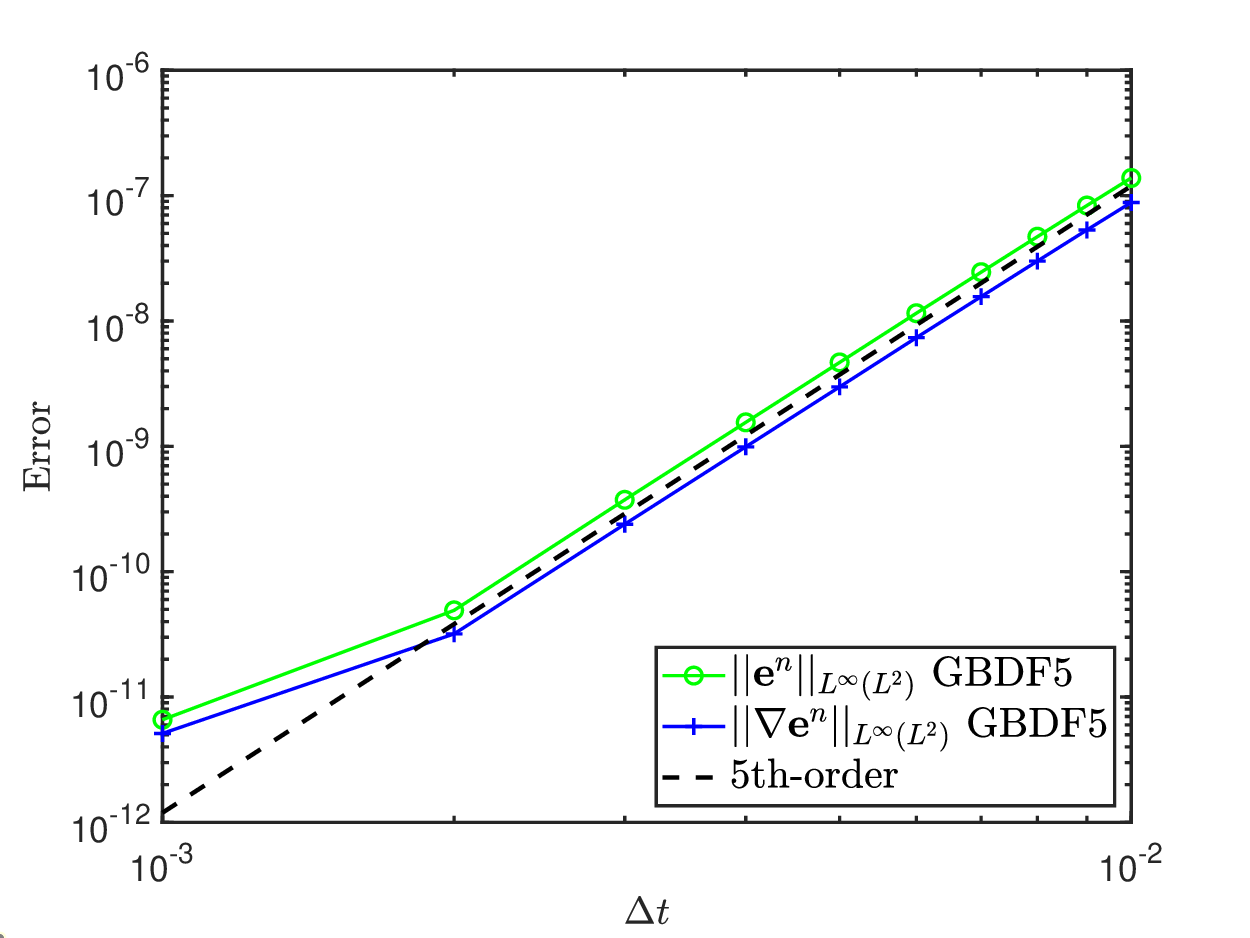}}
	\caption{Numerical convergence rate of the third- to fifth-order semi-implicit GBDF schemes with $\gamma = 1,\,\beta = 0.5$}
	\label{fig2}
\end{figure}

 In Table. \ref{table1}, we present a comparison between the fully explicit classical BDF schemes and the GBDF schemes, with fixed parameters \(\beta = 0.5\), \(\gamma = 1\), and \(T=10\). It is observed  that the GBDF schemes permit  larger time step sizes but with a slightly larger  truncation errors.  
 
 In Table. \ref{table4}, 
 we present a comparison between the semi-implicit classical BDF schemes and the GBDF schemes with \(\beta = 1\) and \(T=0.5\) by examining the influence of the damping parameter \(\gamma\).
 We observe that for the fourth- and fifth-order schemes, the GBDF schemes impose significantly less restrictions on  \(\gamma\). We also observe that the classical BDF schemes yield larger errors when $\gamma$ is small, which clearly reflects the insufficient stability of the classical BDF schemes.

\textbf{Example 2 (with an unknown exact solution).} We consider the Landau-Lifshitz equation \eqref{e_original model} with the initial condition
\begin{equation*}
   \setlength{\abovedisplayskip}{4pt} 
    \left\{
    \begin{array}{l}
         m_e^x(x,y,0) = \sin(x)\cos(y),  \\
         m_e^y(x,y,0) = \cos(x)\cos(y),  \\
         m_e^z(x,y,0) = \sin(y).
    \end{array}  \right. 
    \setlength{\belowdisplayskip}{4pt} 
\end{equation*}
Since the exact solution is unknown, 
 we use the semi-implicit fourth-order BDF scheme with time step $\Delta t=1\times10^{-6}$ to compute a reference solution.
In Figure. \ref{fig3}, we present the error comparison between the classical BDF schemes and the GBDF schemes with the same time step under the parameters $\gamma=1,\,\beta=0.5,\,T=10$, where Figure. \ref{fig3}(a) shows the comparison for third-order BDF and GBDF schemes and Figure. \ref{fig3}(b) for fourth-order BDF and GBDF schemes.
It can be observed that the GBDF schemes exhibit clear advantages.

In Table \ref{table3}, we fix $\Delta t=1\times 10^{-3}, \beta=0.5,$ and $T=2$, and  show the stability of different schemes with different $\gamma$. It can be observed that  the GBDF schemes have a much larger  allowable range of  $\gamma$ than that of the classical BDF schemes, while the allowable range of higher-order schemes is obviously smaller than that of lower-order schemes. 

We observe from the above results that semi-implicit schemes exhibit better stability than fully explicit schemes, but they typically incur a higher computational cost, while higher-order schemes often require smaller time steps than lower-order schemes in order to satisfy stability conditions. 
In Table \ref{table5}, we  fix $\gamma=1, \beta=0.5,$ and $T=10$, and compare the errors of different schemes with different  time step sizes. It is observed that   the  proposed higher-order fully explicit GBDF schemes not only demonstrate better stability than the classical lower-order semi-implicit schemes, but also achieve higher accuracy with lower computational cost. These results indicate that high-order fully explicit GBDF schemes may provide substantial advantages for numerical simulations of the LLG equation. 
\begin{table}[htbp]
	\centering
	\caption{Fully explicit GBDF vs. BDF schemes for Example 1 (\(\gamma = 1,\,\beta = 0.5,\,T=10\))}
	\label{table1}
	\small
	\begin{tabular}{c|c|c|c|c|c} \hline\hline
        Scheme&Time Step&\multicolumn{2}{c|}{\textbf{GBDF}} &\multicolumn{2}{c}{\textbf{Classical BDF}}\\ \hline
		&$\Delta t$&$||\mathbf{e}^n||_{L^{\infty}(L^{2})}$ &$||\nabla \mathbf{e}^n||_{L^{\infty}(L^{2})}$
		&$||\mathbf{e}^n||_{L^{\infty}(L^{2})}$ &$||\nabla \mathbf{e}^n||_{L^{\infty}(L^{2})}$  \\ \hline\hline
		\multirow{5}{*}{{\textbf{\large 3-order}}}
		&$1 \times10^{-2}$ &1.13E-2  &1.49E-2    &NaN      &NaN        \\
		&$8 \times10^{-3}$ &5.70E-3  &7.51E-3     &NaN     &NaN        \\
		&$6 \times10^{-3}$ &2.36E-3  &3.10E-3       &NaN      &NaN      \\
        &$4 \times10^{-3}$ &6.89E-4  &9.03E-4     &NaN      &NaN \\
        &$8 \times10^{-4}$ &5.33E-6  &6.97E-6     &6.67E-7     &8.26E-7         \\
		\hline\hline
        \multirow{5}{*}{{\textbf{\large 4-order}}}&$1 \times10^{-2}$ &1.68E-3  &2.26E-3       &NaN      &NaN          \\
		&$8 \times10^{-3}$ &6.99E-4  &9.41E-4     &NaN     &NaN         \\
		&$6 \times10^{-3}$ &2.24E-4  &3.01E-4    &NaN      &NaN        \\
		&$4 \times10^{-3}$ &4.52E-5  &6.07E-5     &NaN     &NaN        \\
		&$4 \times10^{-4}$ &5.27E-8  &7.02E-8     &2.06E-10      &2.74E-10         \\
		\hline\hline
        \multirow{5}{*}{{\textbf{\large 5-order}}}&$1 \times10^{-2}$ &5.11E-5  &6.67E-5       &NaN      &NaN          \\
		&$8 \times10^{-3}$ &1.62E-5  &2.11E-5     &NaN     &NaN         \\
		&$6 \times10^{-3}$ &3.70E-6  &4.78E-6     &NaN      &NaN         \\
        &$4 \times10^{-3}$ &4.72E-7  &6.10E-7    &NaN      &NaN       \\
		&$2 \times10^{-4}$ &5.25E-8  &6.93E-8     &2.99E-10     &3.91E-10        \\
        \hline\hline
	\end{tabular}
\end{table}

\begin{table}[htbp]
	\centering
	\caption{Semi-implicit GBDF vs. BDF schemes for Example 1 with different $\gamma$ (\(\,\beta = 1,\,T=0.5\))}
	\label{table4}
	\small
    \begin{tabular}{c|c|c|c|c|c} \hline\hline
        Scheme&&\multicolumn{2}{c|}{\textbf{GBDF}} &\multicolumn{2}{c}{\textbf{Classical BDF}}\\ \hline
		&$\gamma$&$||\mathbf{e}^n||_{L^{\infty}(L^{2})}$ &$||\nabla \mathbf{e}^n||_{L^{\infty}(L^{2})}$
		&$||\mathbf{e}^n||_{L^{\infty}(L^{2})}$ &$||\nabla \mathbf{e}^n||_{L^{\infty}(L^{2})}$  \\ \hline\hline
        \multirow{3}{*}{{\textbf{\large 4-order}}}&$\gamma = 0.1$ &4.87E-5  &1.14E-4       &NaN      &NaN          \\
		&$\gamma = 0.2$ &4.37E-11  &6.50E-11     &2.51E-6     &2.90E-6         \\
		 \multirow{1}{*}{{$\Delta t = 7\times 10^{-4}$}}&$\gamma = 0.3$ &4.61E-11  &6.72E-11      &6.95E-7  &8.36E-7      \\
       &$\gamma = 0.5$ &5.10E-11  &7.27E-11     &1.30E-12     &1.95E-12         \\
		\hline\hline
        \multirow{3}{*}{{\textbf{\large 5-order}}}&$\gamma = 0.4$ &6.57E-12  &1.08E-11       &NaN      &NaN          \\
        &$\gamma = 0.5$ &7.15E-12  &1.09E-11     &NaN      &NaN         \\
		\multirow{1}{*}{{$\Delta t = 8\times10^{-4}$}}&$\gamma = 0.6$ &7.54E-12  &1.10E-11     &2.95E-6      &3.39E-6         \\
        &$\gamma = 1.2$&7.41E-12  &9.74E-12     &1.02E-13  &7.82E-14        \\
        \hline\hline
	\end{tabular}
\end{table}

\begin{figure}[htbp]
	\centering
    \subfigure[BDF3 v.s. GBDF3]
	{\includegraphics[width=0.4\linewidth, trim = {0cm 0cm 0cm 0cm}, clip]{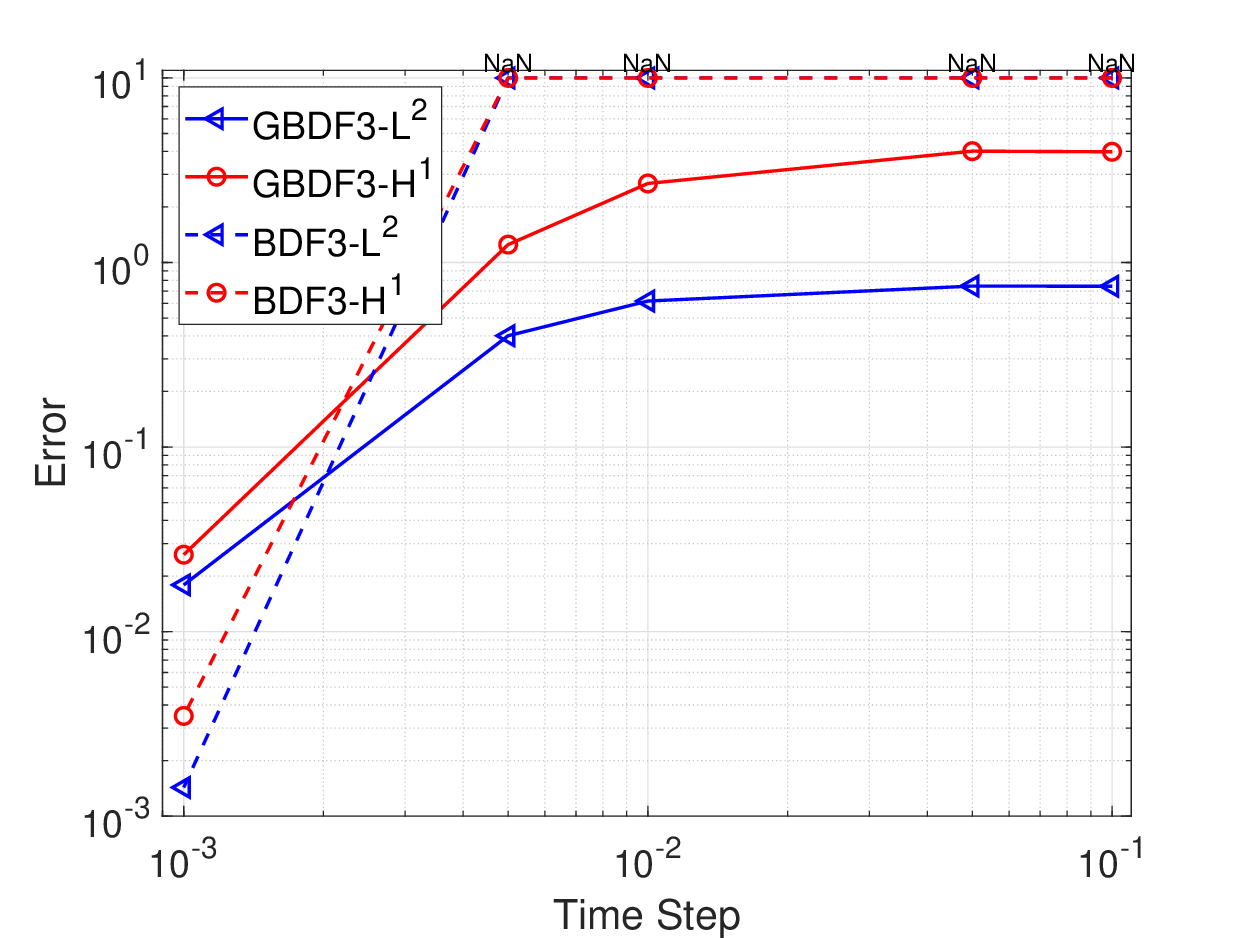}}
	\subfigure[BDF4 v.s. GBDF4]
	{\includegraphics[width=0.4\linewidth, trim = {0cm 0cm 0cm 0cm}, clip]{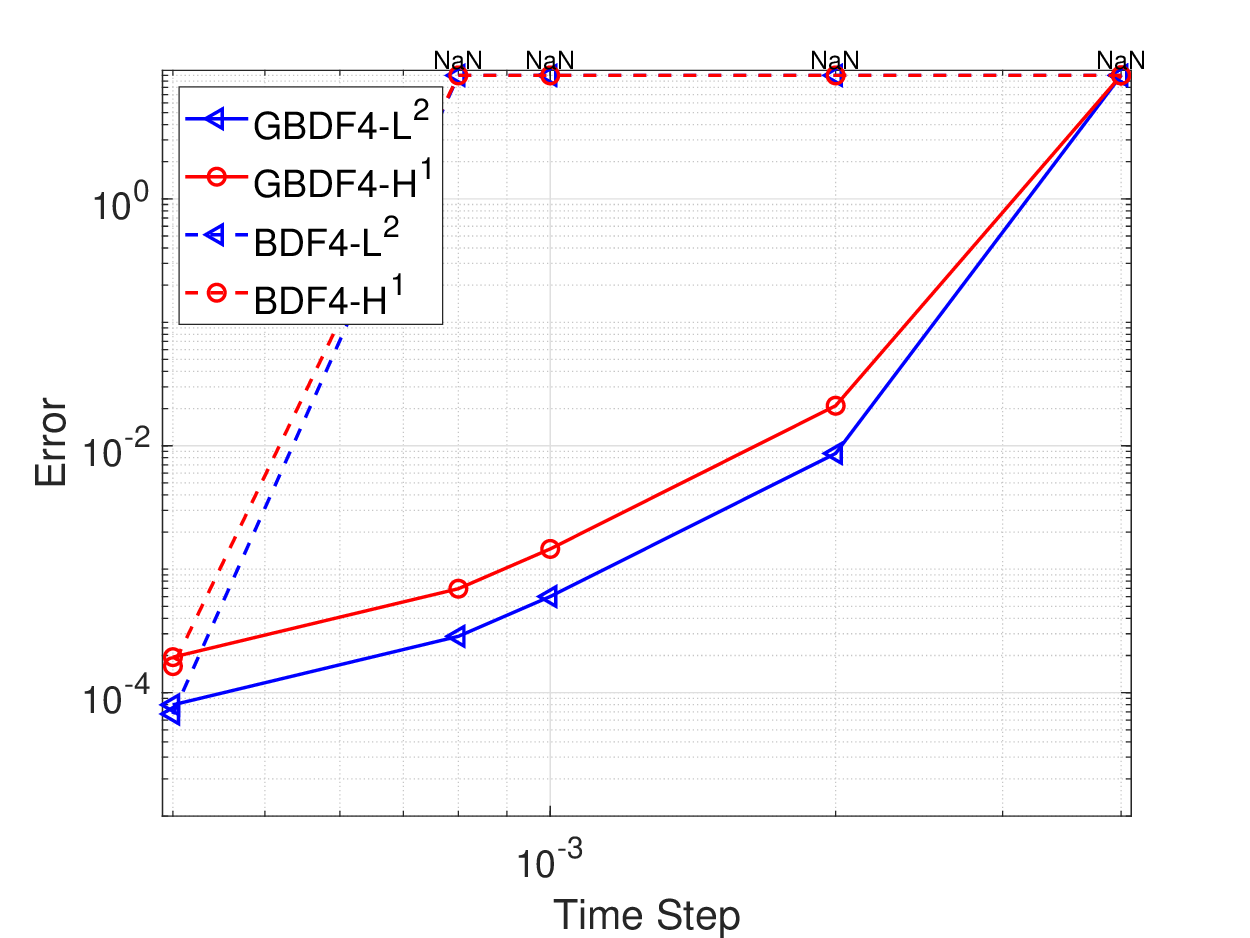}}
	\caption{BDF v.s. GBDF schemes for Example 2 with $\gamma = 1,\,\beta = 0.5, \,T=10$}
	\label{fig3}
\end{figure}

\begin{table}[htbp]
	\centering
	\caption{Different $\gamma$ with BDF and GBDF schemes for Example 2 (\(\Delta t = 1e-3,\,\beta = 0.5,\,T=2\))}
	\label{table3}
	\small
	\begin{tabular}{c|c|c|c|c|c} \hline\hline
        Scheme & $\gamma=1.0$ & $\gamma = 0.8$ & $\gamma =0.6$ & $\gamma = 0.4$ & $\gamma = 0.3$\\ \hline
        BDF3 & \ding{51} & \ding{55} & \ding{55} & \ding{55} & \ding{55}\\ \hline
        GBDF3 & \ding{51} & \ding{51} & \ding{51} & \ding{51} & \ding{55}\\ \hline
        BDF4 & \ding{55} & \ding{55} & \ding{55} & \ding{55} & \ding{55}\\ \hline
        GBDF4 & \ding{51} & \ding{51} & \ding{51} & \ding{55} & \ding{55}\\
        \hline\hline
	\end{tabular}

    \par
    \vspace{0.5em}
    \footnotesize
    \textit{Note:} \ding{51}-Success,\,\,\ding{55}-NaN
\end{table}

\begin{table}[htbp]
	\centering
	\caption{Semi-implicit BDF2 scheme v.s. Fully explicit GBDF3 and BDF4 schemes for Example 2 (\(\gamma  = 1,\,\beta = 0.5,\,T=10\))}
	\label{table5}
	\small
	\begin{tabular}{c|c|c|c|c|c|c} \hline\hline
        Time Step&\multicolumn{2}{c|}{\textbf{Semi-implicit BDF2}} &\multicolumn{2}{c|}{\textbf{Fully explicit GBDF3}}&\multicolumn{2}{c}{\textbf{Fully explicit GBDF4}}\\ \hline
		$\Delta t$&$||\mathbf{e}^n||_{L^{\infty}(L^{2})}$ &$||\nabla \mathbf{e}^n||_{L^{\infty}(L^{2})}$
		&$||\mathbf{e}^n||_{L^{\infty}(L^{2})}$ &$||\nabla \mathbf{e}^n||_{L^{\infty}(L^{2})}$ &$||\mathbf{e}^n||_{L^{\infty}(L^{2})}$ &$||\nabla \mathbf{e}^n||_{L^{\infty}(L^{2})}$ \\ \hline\hline
		$4 \times10^{-3}$ &NaN  &NaN       &3.18E-1      &9.16E-1 &NaN &NaN          \\
		$2 \times10^{-3}$ &NaN  &NaN     &1.03E-1     &2.59E-1 &8.67E-3 &2.11E-2         \\
		$1 \times10^{-3}$ &NaN  &NaN    &1.79E-2      &4.37E-2 & 6.01E-4 &1.46E-3       \\
        $4 \times10^{-4}$ &3.93E-2  &9.48E-2    &1.16E-3      &2.82E-3   &7.97E-5 &1.94E-4   \\
         $2 \times10^{-4}$ &8.87E-3  &2.15E-2    &8.54E-5      &2.08E-4  &6.68E-5 &1.63E-4      \\
		\hline\hline
	\end{tabular}
\end{table}

\section{Concluding remarks}
In this paper, we developed a class of high-order semi-implicit and fully explicit generalized backward differentiation formula (GBDF) schemes for the Landau-Lifshitz-Gilbert equation. The primary challenge in the analysis of high-order time discretizations for this problem lies in the loss of cancellation in the skew-symmetric trilinear term at the discrete level, which in existing approaches leads to restrictive lower bounds on the damping parameter.

To overcome this difficulty, we introduced a novel multiplier that restores a suitable structure in the discrete energy analysis. This new analytical mechanism enables us to control the non-vanishing trilinear terms and to establish optimal-order error estimates under substantially relaxed assumptions on the damping parameter. In particular, for fourth- and fifth-order schemes, the proposed approach significantly enlarges the admissible parameter regimes and permits the use of larger time steps compared with classical BDF-based methods.

The analysis presented here provides a unified framework for both semi-implicit and fully explicit treatments of the gyromagnetic term. More broadly, it offers a new perspective on the treatment of skew-symmetric nonlinearities in high-order multistep schemes, which may be of independent interest beyond the LLG equation.

An important direction is the development of  fully discrete schemes that retain the main advantages of the proposed time-discrete schemes and preserve additional geometric features. It would also be of interest to extend the proposed framework to  other nonlinear evolution equations with similar structural properties.  Finally, further investigation of the practical performance of the proposed methods in large-scale simulations and applications to complex magnetic systems would be valuable.

\bibliographystyle{plain}
\bibliography{ref}

\begin{thebibliography}{10}

\bibitem{akrivis2021higher}
Georgios Akrivis, Michael Feischl, Bal{\'a}zs Kov{\'a}cs, and Christian Lubich.
\newblock Higher-order linearly implicit full discretization of the {L}andau--{L}ifshitz--{G}ilbert equation.
\newblock {\em Mathematics of Computation}, 90(329):995--1038, 2021.

\bibitem{alouges2006convergence}
Fran{\c{c}}ois Alouges and Pascal Jaisson.
\newblock Convergence of a finite element discretization for the {L}andau--{L}ifshitz equations in micromagnetism.
\newblock {\em Mathematical Models and Methods in Applied Sciences}, 16(02):299--316, 2006.

\bibitem{an2021optimal}
Rong An, Huadong Gao, and Weiwei Sun.
\newblock Optimal error analysis of {E}uler and {C}rank--{N}icolson projection finite difference schemes for {L}andau--{L}ifshitz equation.
\newblock {\em SIAM Journal on Numerical Analysis}, 59(3):1639--1662, 2021.

\bibitem{an2025optimal}
Rong An, Yonglin Li, and Weiwei Sun.
\newblock Optimal error analysis of the normalized tangent plane {FEM} for {L}andau--{L}ifshitz--{G}ilbert equation.
\newblock {\em IMA Journal of Numerical Analysis}, 45(5):3109--3137, 2025.

\bibitem{cai2022second}
Yongyong Cai, Jingrun Chen, Cheng Wang, and Changjian Xie.
\newblock A second-order numerical method for {L}andau-{L}ifshitz-{G}ilbert equation with large damping parameters.
\newblock {\em Journal of Computational Physics}, 451:110831, 2022.

\bibitem{cervera2007numerical}
Carlos J~Garc{\'\i}a Cervera.
\newblock Numerical micromagnetics: a review.
\newblock {\em SeMA Journal: Bolet{\'\i}n de la Sociedad Espa{\~n}ola de Matem{\'a}tica Aplicada}, (39):103--136, 2007.

\bibitem{cimrak2007survey}
Ivan Cimr{\'a}k.
\newblock A survey on the numerics and computations for the landau-lifshitz equation of micromagnetism.
\newblock {\em Archives of Computational Methods in Engineering}, 15(3):1--37, 2007.

\bibitem{cohen1989relaxation}
Robert Cohen, San-Yih Lin, and Mitchell Luskin.
\newblock Relaxation and gradient methods for molecular orientation in liquid crystals.
\newblock {\em Computer Physics Communications}, 53(1-3):455--465, 1989.

\bibitem{dahlquist1978g}
Germund Dahlquist.
\newblock {G}-stability is equivalent to {A}-stability.
\newblock {\em BIT Numerical Mathematics}, 18:384--401, 1978.

\bibitem{gui2022convergence}
Xinping Gui, Buyang Li, and Jilu Wang.
\newblock Convergence of renormalized finite element methods for heat flow of harmonic maps.
\newblock {\em SIAM Journal on Numerical Analysis}, 60(1):312--338, 2022.

\bibitem{gui2024implicit}
Yan Gui, Cheng Wang, and Jingrun Chen.
\newblock Implicit-explicit {R}unge-{K}utta methods for {L}andau-{L}ifshitz equation with arbitrary damping.
\newblock {\em Communications in Mathematical Sciences}, 2024.

\bibitem{he2024temporal}
Jiayun He, Lei Yang, and Jiajun Zhan.
\newblock Temporal high-order accurate numerical scheme for the {L}andau--{L}ifshitz--{G}ilbert equation.
\newblock {\em Mathematics}, 12(8):1179, 2024.

\bibitem{HeSu07}
Yinnian He and Weiwei Sun.
\newblock Stability and convergence of the {C}rank--{N}icolson/{A}dams--{B}ashforth scheme for the time-dependent {N}avier--{S}tokes equations.
\newblock {\em SIAM Journal on Numerical Analysis}, 45(2):837--869, 2007.

\bibitem{huang2024new}
Fukeng Huang and Jie Shen.
\newblock On a new class of {BDF} and {IMEX} schemes for parabolic type equations.
\newblock {\em SIAM Journal on Numerical Analysis}, 62(4):1609--1637, 2024.

\bibitem{huang2025stability}
Fukeng Huang and Jie Shen.
\newblock Stability and error analysis of a new class of higher-order consistent splitting schemes for the navier-stokes equations.
\newblock {\em Mathematics of Computation}, 2025.

\bibitem{huang2003high}
Zhongyi Huang.
\newblock High accuracy numerical method of thin-film problems in micromagnetics.
\newblock {\em Journal of Computational Mathematics}, pages 33--40, 2003.

\bibitem{jia2025electrically}
Junhui Jia et~al.
\newblock Electrically tuning photonic topological quasiparticles in synthetic two-level system.
\newblock {\em Nature Physics}, pages 1--8, 2025.

\bibitem{jiang2015blowing}
Wanjun Jiang et~al.
\newblock Blowing magnetic skyrmion bubbles.
\newblock {\em Science}, 349(6245):283--286, 2015.

\bibitem{kim2017mimetic}
Eugenia Kim and Konstantin Lipnikov.
\newblock The mimetic finite difference method for the landau--lifshitz equation.
\newblock {\em Journal of Computational Physics}, 328:109--130, 2017.

\bibitem{kruzik2006recent}
Martin Kruzik and Andreas Prohl.
\newblock Recent developments in the modeling, analysis, and numerics of ferromagnetism.
\newblock {\em SIAM review}, 48(3):439--483, 2006.

\bibitem{landau1935theory}
LALE Landau, Evgeny Lifshitz, et~al.
\newblock On the theory of the dispersion of magnetic permeability in ferromagnetic bodies.
\newblock {\em Phys. Z. Sowjetunion}, 8(153):101--114, 1935.

\bibitem{li2026class}
Xiaoli Li, Jie Shen, and Nan Zheng.
\newblock On a class of higher-order length preserving and energy decreasing {IMEX} schemes for the {L}andau-{L}ifshitz equation.
\newblock {\em Journal of Computational Physics}, page 114786, 2026.

\bibitem{nagaosa2013topological}
Naoto Nagaosa and Yoshinori Tokura.
\newblock Topological properties and dynamics of magnetic skyrmions.
\newblock {\em Nature {N}anotechnology}, 8(12):899--911, 2013.

\bibitem{prohl2001computational}
Andreas Prohl et~al.
\newblock {\em Computational micromagnetism}.
\newblock Springer, 2001.

\bibitem{romming2013writing}
Niklas Romming, Christian Hanneken, Matthias Menzel, Jessica~E Bickel, Boris Wolter, Kirsten Von~Bergmann, Andr{\'e} Kubetzka, and Roland Wiesendanger.
\newblock Writing and deleting single magnetic skyrmions.
\newblock {\em Science}, 341(6146):636--639, 2013.

\bibitem{shen1990long}
Jie Shen.
\newblock Long time stability and convergence for fully discrete nonlinear {G}alerkin methods.
\newblock {\em Applicable Analysis}, 38(4):201--229, 1990.

\bibitem{suess2002time}
Dieter Suess, Vassilios Tsiantos, Thomas Schrefl, Josef Fidler, Werner Scholz, Hermann Forster, Rok Dittrich, and James~J Miles.
\newblock Time resolved micromagnetics using a preconditioned time integration method.
\newblock {\em Journal of Magnetism and Magnetic Materials}, 248(2):298--311, 2002.

\bibitem{xie2025error}
Changjian Xie and Cheng Wang.
\newblock Error analysis of third-order in time and fourth-order linear finite difference scheme for {L}andau-{L}ifshitz-{G}ilbert equation under large damping parameters.
\newblock {\em arXiv preprint arXiv:2510.25172}, 2025.

\end{thebibliography}

\end{document}